\documentclass[12pt]{article}
\usepackage{mathtools} 
\usepackage{amsthm}
\usepackage{url}
\usepackage{amsmath} 
\usepackage{latexsym}  
\usepackage{amssymb}

\newcommand{\N}{\mathbb{N}}
\newcommand{\Z}{\mathbb{Z}}
\newcommand{\Cay}{\,\mathrm{Cay}}

\newtheorem{theorem}{\bf Theorem}
\newtheorem{proposition}{\bf Proposition}

\newtheorem{example}{\bf Example}

\newtheorem{lemma}{\bf Lemma}

\renewenvironment{proof}{{\it Proof.\/}}{\bf $\Box$\\}
\newenvironment{proofthm}{\vskip .1in{\it Proof\/}}{\bf $\Box$\\}

\setlength{\topmargin}{-0.5in}
\setlength{\topskip}{0in}
\setlength{\textwidth}{6.5in}
\setlength{\textheight}{8.7in}
\setlength{\oddsidemargin}{0.0in}

\begin{document}

\begin{center}
{\LARGE\bf Distance labellings of Cayley graphs of semigroups}
\end{center}

\begin{center}
{\sc Andrei Kelarev, Charl Ras, Sanming Zhou}
\end{center}

\begin{center}
School of Mathematics and Statistics\\
The University of Melbourne, Parkville, Victoria 3010, Australia\\
{\tt andreikelarev-universityofmelbourne@yahoo.com}\\
{\tt \{cjras,smzhou\}@ms.unimelb.edu.au}
\end{center}

\begin{abstract}
This paper establishes connections
between the structure of a semigroup and
the minimum spans of distance labellings
of its Cayley graphs.
We show that certain general
restrictions on the minimum spans are
equivalent to the semigroup being
combinatorial, and that other
restrictions are equivalent to the
semigroup being a right zero band.
We obtain a description
of the structure of all semigroups
$S$ and their subsets $C$ such that
$\Cay(S,C)$ is a disjoint union of
complete graphs, and show that
this description is also equivalent
to several restrictions on the
minimum span of $\Cay(S,C)$.
We then describe all graphs
with minimum spans satisfying the
same restrictions, and give
examples to show that a
fairly straightforward upper bound for
the minimum spans of the underlying
undirected graphs of Cayley
graphs turns out to be sharp even
for the class of combinatorial semigroups.
\end{abstract}

\noindent {\bf Keywords:}
distance labellings,
combinatorial semigroups,
bands,
Cayley graphs,
unions of complete graphs

\section{Introduction}\label{s:introduction}

This paper investigates distance labellings
of Cayley graphs of semigroups.
Throughout the paper the term \emph{graph}
means a finite directed graph without
multiple edges, but possibly with loops.
A graph $\Gamma = (V, E)$ is said to be
\textit{undirected} if and only if, for
every $(u, v) \in E$, the edge $(v, u)$
belongs to $E$. All our theorems will
involve undirected Cayley graphs only.
Let $\N$ be the set of all positive
integers, and let $k_1, k_2, \dots,
k_\ell \in \N$ for some $\ell\geq 1$.
A \textit{distance labelling}, or an
\emph{$L(k_1, k_2, \dots, k_\ell)$-labelling}
of an undirected graph $\Gamma = (V, E)$
is a mapping $f:V \rightarrow \N$ such
that $|f(u) - f(v)| \ge k_t$ for
$t= 1, 2, \dots, \ell$ and any
$u, v \in V$ with $d(u, v)=t$,
where $d(u, v)$ is the distance
between $u$ and $v$ in $\Gamma$.
The integer $f(v)$ is called the
\emph{label} assigned to $v$ under $f$,
and the difference between the largest and
the smallest labels is called the
\emph{span} of $f$. The minimum span
over the set of all $L(k_1, k_2, \dots,
k_\ell)$-labellings of $\Gamma$ is
denoted by $\lambda_{k_1, k_2, \dots,
k_\ell}(\Gamma)$.


Distance labellings and their
minimum spans have been investigated,
for example, in
\cite{Calamoneri2006,
ChangLuSanmingZhou2009,
KingRasSanmingZhou2010,
KingLiSanmingZhou2014,
SanmingZhou2006}.
The problem of finding
$\lambda_{k_1, k_2, \dots,
k_\ell}(\Gamma)$ is motivated
by the radio channel assignment problem and by
the study of the scalability of optical
networks (cf.
\cite{ChangLuSanmingZhou2007,
SanmingZhou2008}).
The problem is also important for the
theory of classical graph colourings:
for any graph $\Gamma$,
it is clear that the minimum span
$\lambda_{1, \ldots, 1}(\Gamma)$
is one less than the chromatic number
of the $\ell$-th power of $\Gamma$.
This relation is valuable, since the problem
of studying the chromatic numbers of powers
of graphs is a significant problem in
graph theory (see, for example,
\cite{{AlonMohar2002}}). For illustrating
examples of distance labellings and their
minimum spans the reader can turn to
Examples~4 and~5 in Section~2 of our
paper.


The notion of a Cayley graph was
introduced by Arthur Cayley to explain
the structure of groups defined by a set
of generators and relations. The
literature on Cayley graphs
of groups is vast and includes many
deep results related to structural and
applied graph theory; see for instance
\cite{BrankovicMillerPlesnikRyanSiran1998,
ChangLuSanmingZhou2007,
LakshmivarahanJwoDhall1993,
LiPraeger1999,
Praeger1999,
SanmingZhou2006}.

Cayley graphs of semigroups, which we
define next, are
also very well known and have been
studied since at least the 1960's
\cite{Denes1967,Zelinka1981}.
Let $S$ be a semigroup, and let $C$
be a nonempty subset of $S$. The
\textit{Cayley graph} $\Cay(S, C)$ of
$S$ with \emph{connection set} $C$
is defined as the graph with vertex
set $S$ and edge set $E = E(S, C)$
consisting of those ordered pairs $(u,
v)$ such that $c u = v$ for some $c \in
C$. If the adjacency relation of
$\Cay(S, C)$ is symmetric, then it
becomes an undirected graph.
Cayley graphs are the most important
class of graphs associated with
semigroups, as they have valuable
applications
(cf.~\cite{ChangLuSanmingZhou2007,
KelarevRyanYearwood2009cayley,
Knauer2011})
and are related to automata theory
(cf.~\cite{Kelarev2004cayley,
Kelarev:gaa}).
Many interesting results on
Cayley graphs of various classes
of semigroups have been obtained
recently, for example, in
\cite{KhosraviKhosraviKhosravi2014,
KhosraviMahmoudi2010,
Knauer2011,
LuoHaoClarke2011,
PanmaChiangmaiKnauerArworn2006,
WangLi2013}.

A semigroup is said to be
\textit{combinatorial} if all of
its subgroups are singletons (see
\cite{AbawajyKelarev2013combinatorial,
KelarevYearwoodWatters2011combinatorial}
for recent results on this
class of semigroups).
Cayley graphs of combinatorial
semigroups have not been considered
in full generality; only small
subclasses of this class have been
investigated. A \textit{band} is
a semigroup entirely consisting of
idempotents, i.e., elements $x$
satisfying $x = x^2$. The class of
combinatorial semigroups contains all
bands and all Brandt semigroups over
groups of order one.  Bands play
crucial roles in the structure
theory of semigroups (see
\cite{Howie:fst} and
\cite{Kelarev:gaa}). Cayley
graphs of bands were explored
in \cite{FanZeng2007,
GaoLiuLuo2011,
Jiang2004,
KhosraviKhosravi2013,
KhosraviKhosravi2014,
KhosraviKhosraviKhosravi2015},
and Cayley graphs of Brandt semigroups
were studied in \cite{HaoGaoLuo2011,
KhosraviKhosravi2012}.

Our first main theorem shows that
certain general restrictions on a
certain formula expressing the span
of a Cayley graph of a semigroup can
force the semigroup to be combinatorial
(Theorem~\ref{t:span_combinatorial})
or to be a right zero band
(Theorem~\ref{t:span_right_zero_band}),
which are both well-known semigroup classes.

In addition to the results above, in
Theorem~\ref{t:span_cayley_union_of_complete}
we describe all semigroups
$S$ and their subsets $C$ such that
the Cayley graph $\Cay(S, C)$
satisfies the restrictions that occur
in our first two theorems. This yields the
first description of the structure
of all semigroups that possess Cayley
graphs that are disjoint unions of
complete graphs. An abstract
characterisation of all Cayley graphs of
semigroups which are disjoint unions of
complete graphs was obtained in
\cite{Kelarev2004inverse}
in the language of equalities that hold
for all elements of the semigroup.
For several special classes of
semigroups, a number of improvements
and alternative element-wise
characterisations have been obtained
in \cite{LuoHaoClarke2011}
and~\cite{WangLi2013}.
However, the structure of such
semigroups has not been characterised
before, except in the special case of semigroups
with bipartite Cayley graphs \cite{Kelarev2004inverse}.

In Theorem~\ref{t:span=F(k1)},
we give a characterisation of
all graphs whose minimum spans can be determined
by a formula with some of the
restrictions that occur in the first
three theorems.

Finally, we give examples to
show that a straightforward upper bound
for the minimum spans of the underlying
undirected graphs of Cayley
graphs turns out to be sharp even for
the class of combinatorial semigroups.

\section{Main results}
\label{s:main_results}

Our first main result shows that
several restrictions on the minimum
span of a Cayley graph are equivalent
to the semigroup being combinatorial. As
mentioned before, all our theorems will
involve undirected Cayley graphs only.
For any set or semigroup $Q$, we use the
notation $|Q|$ for the cardinality of $Q$.

\begin{theorem}\label{t:span_combinatorial}
For any finite semigroup $S$,
the following conditions are
equivalent:

\begin{itemize}
\item[\rm(i)]
$S$ is combinatorial;

\item[\rm(ii)]
there exist a function
$F:\N \rightarrow \N$ and an
integer $\ell > 1$ such that,
for each nonempty subsemigroup $T$
of $S$ and every nonempty subset
$C$ of $T$, if $\Cay(T, C)$
is undirected then
\begin{equation}\label{eq:comb:lambda_le_F(k1)}
\lambda_{k_1, \ldots, k_\ell}
(\Cay(T, C)) \le F(k_1);
\end{equation}

\item[\rm(iii)]
there exist a function
$F:\N \rightarrow \N$ and
an integer $\ell > 1$ such that,
for each nonempty subsemigroup $T$
of $S$ and every nonempty subset
$C$ of $T$, if $\Cay(T, C)$
is undirected, then
\begin{equation}
\lambda_{k_1, \ldots, k_\ell}
(\Cay(T, C)) = F(k_1);
\end{equation}

\item[\rm(iv)]
there exists $\ell > 1$ such that,
for each nonempty subsemigroup $T$
of $S$ and every nonempty subset
$C$ of $T$, if $\Cay(T, C)$
is undirected, then
\begin{equation}
\lambda_{k_1, \ldots, k_\ell}(\Cay(T, C)) =
(|Cc| - 1) k_1
\end{equation}
for a fixed element $c$ in $C$;

\item[\rm(v)]
for each nonempty subsemigroup $T$
of $S$ and every nonempty subset $C$
of $T$, if $\Cay(T, C)$ is
undirected, then the equality
\begin{equation}\label{eq:comb:lambda=(|Cc|-1)k1}
\lambda_{k_1, \ldots, k_\ell}(\Cay(T, C)) =
(|Cc| - 1) k_1
\end{equation}
holds for every integer $\ell > 1$,
all $k_1, \ldots, k_\ell \in \N$,
and every element $c$ is $C$.
\end{itemize}
\end{theorem}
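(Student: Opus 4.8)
The plan is to prove the five conditions equivalent by the cycle
(i)$\Rightarrow$(v)$\Rightarrow$(iv)$\Rightarrow$(ii)$\Rightarrow$(i), tying in (iii) via (i)$\Rightarrow$(iii)$\Rightarrow$(ii); the genuine content sits in the two implications (i)$\Rightarrow$(v) and (ii)$\Rightarrow$(i), while everything else is essentially formal. For (i)$\Rightarrow$(v) the structural claim I would isolate is that, when $S$ is combinatorial, every undirected $\Cay(T,C)$ is a disjoint union of complete graphs all of whose components have the same size, namely $|Cc|$ for any $c\in C$. To see the components are complete I would argue with Green's relations: if $(u,cu)$ is an edge then $cu\in Tu$, so $T^1(cu)\subseteq T^1u$, and undirectedness gives $c'\in C$ with $c'cu=u$, hence the reverse inclusion and $u\,\mathcal{L}\,cu$. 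Thus every edge joins $\mathcal{L}$-equivalent vertices and each connected component lies in a single $\mathcal{L}$-class; combinatoriality makes $\mathcal{H}$ the identity relation, and I would combine this with undirectedness to show that the left translations by $C$ act bijectively inside a component, forcing any two of its vertices to be adjacent and every component to have exactly $|Cc|$ elements independently of $c$.

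Granting this lemma, each component is a copy of $K_{|Cc|}$; distinct components are at infinite distance so labels may be reused across them, and since no two vertices lie at distance $2,\dots,\ell$ only the distance-$1$ constraint is active. As the minimum span of $K_m$ under that single constraint is $(m-1)k_1$, one obtains $\lambda_{k_1,\dots,k_\ell}(\Cay(T,C))=(|Cc|-1)k_1$ for every $\ell>1$, all $k_i$, and every $c\in C$, which is precisely (v). The remaining positive implications are then immediate. (v)$\Rightarrow$(iv) is specialization to one $\ell$ and one $c$; from (iv), and equally from the computation just given which also establishes (iii), the span equals $(|Cc|-1)k_1$, a function of $k_1$, and since $|Cc|\le|S|$ it is bounded above by $F(k_1)=(|S|-1)k_1$, yielding (ii); finally (iii)$\Rightarrow$(ii) holds because an equality forces the corresponding inequality.

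The substantial converse is (ii)$\Rightarrow$(i), which I would prove by contraposition. If $S$ is not combinatorial it contains a nontrivial subgroup, and the aim is to exhibit one subsemigroup $T$ and one symmetric connection set $C$ for which $\Cay(T,C)$ has two vertices at distance exactly $2$. Once such a pair is present, for any fixed $\ell>1$ the requirement $|f(u)-f(v)|\ge k_2$ is active, so $\lambda_{k_1,\dots,k_\ell}(\Cay(T,C))\ge k_2$; letting $k_2\to\infty$ with $k_1$ held fixed shows the span cannot be dominated by any $F(k_1)$, contradicting (ii). Producing a vertex pair at distance precisely two is where the work lies: a symmetric generating set of the subgroup typically yields a Cayley graph of diameter at least two, but the small cyclic subgroups, whose natural Cayley graphs are complete, must be handled separately, and there I would either enlarge the connection set or pass to a slightly larger subsemigroup to manufacture a path of length two.

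The main obstacle is the structural lemma underpinning (i)$\Rightarrow$(v): proving that combinatoriality forces each undirected Cayley component to be complete and of the uniform size $|Cc|$. This is delicate precisely because two-step reachability in a Cayley graph need not collapse to one step, so adjacency has to be extracted from $\mathcal{H}$-triviality together with the undirectedness rather than from any naive transitivity. This lemma is in effect an anticipation of the disjoint-union-of-complete-graphs description, and it is what carries the whole theorem.
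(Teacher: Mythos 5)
Your architecture (two hard implications, everything else formal) matches the paper's, but both hard implications have genuine gaps. The structural lemma you yourself flag as the main obstacle for (i)$\Rightarrow$(v) is not only unproved but false as you state it: the paper's Example~\ref{ex:left_zero_band_with_zero_adjoined} (a left zero band $B$ with a zero adjoined, $C=B$) is a combinatorial semigroup whose undirected Cayley graph is $K_{|B|}\cup K_1$, so the complete components need \emph{not} all have size $|Cc|$; worse, on the large component left translation by any $c\in C$ is the \emph{constant} map onto $c$, so your proposed mechanism (``left translations by $C$ act bijectively inside a component'') fails outright. What the formula actually needs is only that every component has at most $|Cc|$ vertices while the component of $c$ has exactly $|Cc|$. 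The paper gets this not from Green's relations but from Proposition~\ref{p:Cayley_undirected}: undirectedness forces $CT=T$ and $\langle C\rangle$ completely simple, combinatoriality kills the structure group so $\langle C\rangle$ is a rectangular band $I\times\Lambda$, one computes $Cx=\langle C\rangle x=\{(j,\lambda):j\in I\}$ explicitly for $x\in\langle C\rangle$, and then completeness is transferred to an arbitrary $Ct$ via an idempotent $e_t\in C$ with $e_t t=t$ (Lemma~\ref{l:e_xx=x}). Your sketch stops at ``edges stay inside one $\mathcal{L}$-class''; the passage from $\mathcal{H}$-triviality to completeness of components is exactly the step that is missing, and the mechanism you propose for it is contradicted by the example above.

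The (ii)$\Rightarrow$(i) direction has a second, independent gap that your suggested patches do not close. Your plan requires a subsemigroup $T$ and symmetric $C$ with two vertices at distance exactly $2$. You correctly observe that $\Z_2$ and $\Z_3$ resist this, but ``enlarging the connection set or passing to a larger subsemigroup'' is not available in general: if $S=\Z_2$, say, then every undirected $\Cay(T,C)$ over every choice of $T$ and $C$ is a vertex-disjoint union of complete graphs, so no distance-$2$ pair exists anywhere and the span really is bounded by a function of $k_1$. The paper's contradiction comes from a different and essential observation: in its convention a ``disjoint union of complete graphs'' carries a loop at every vertex, so applying Theorem~\ref{t:span_cayley_union_of_complete} to $T=G$, $C=\{g,g^{-1}\}$ forces $(e,e)$ to be an edge, whereas $ge\neq e$ and $g^{-1}e\neq e$. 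The contradiction is the missing loop at the identity, not a distance-$2$ pair. Without that idea (or a substitute for it) your contraposition cannot be completed precisely in the small cyclic cases where the implication's content lives.
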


A band $B$
is called a \textit{right zero band},
if it satisfies the identity
$x y = y$, for all $x, y \in B$.
Our second result establishes
several conditions on the minimum
span that are equivalent to the semigroup
being a right zero band.

\begin{theorem}\label{t:span_right_zero_band}
For any finite semigroup $S$,
the following conditions are
equivalent:

\begin{itemize}
\item[\rm(i)]
$S$ is a right zero band;

\item[\rm(ii)]
there exist a function
$F:\N \rightarrow \N$ and
an integer $\ell > 1$
such that, for each nonempty
subset $C$ of $S$, $\Cay(S, C)$
is undirected and
\begin{equation}
\lambda_{k_1, \ldots, k_\ell}
(\Cay(S, C)) \le F(k_1);
\end{equation}

\item[\rm(iii)]
there exist a function
$F:\N \rightarrow \N$ and
an integer $\ell > 1$ such
that, for each nonempty subset
$C$ of $S$, $\Cay(S, C)$
is undirected and
\begin{equation}
\lambda_{k_1, \ldots, k_\ell}
(\Cay(S, C)) = F(k_1);
\end{equation}

\item[\rm(iv)]
there exists $\ell > 1$ such
that, for each nonempty subset
$C$ of $S$, $\Cay(S, C)$ is
undirected and
\begin{equation}
\lambda_{k_1, \ldots, k_\ell}
(\Cay(S, C)) = (|Cc| - 1) k_1,
\end{equation}
where $c$ is a fixed element in $C$;

\item[\rm(v)]
for each nonempty subset $C$ of $S$,
$\Cay(S, C)$ is
undirected and the equality
\begin{equation}\label{eq:lambda=(|Cc|-1)k_1}
\lambda_{k_1, \ldots, k_\ell}(\Cay(S, C)) =
(|Cc| - 1) k_1
\end{equation}
holds for every integer $\ell > 1$,
all $k_1, \ldots, k_\ell \in \N$,
and all elements $c$ of $C$.
\end{itemize}
\end{theorem}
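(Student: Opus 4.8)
The plan is to arrange the five conditions into a cycle in which all the content is concentrated in $(ii)\Rightarrow(i)$. For $(i)\Rightarrow(v)$, observe that in a right zero band $cu=u$ for all $c,u$, so the edge set of $\Cay(S,C)$ consists of loops only; hence $\Cay(S,C)$ is trivially undirected, every pair of distinct vertices is at distance $\infty$, and the constant labelling gives $\lambda_{k_1,\dots,k_\ell}(\Cay(S,C))=0$. Since $xc=c$ forces $Cc=\{c\}$ and $|Cc|=1$, the right-hand side $(|Cc|-1)k_1$ also equals $0$, so $(v)$ holds; $(i)\Rightarrow(iii)$ follows by taking $F$ to be the zero function. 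The implications $(v)\Rightarrow(iv)$ (specialise the quantifiers over $\ell$ and $c$), $(iv)\Rightarrow(ii)$ (bound $|Cc|\le|S|$ and set $F(k_1)=(|S|-1)k_1$), and $(iii)\Rightarrow(ii)$ are immediate. Thus everything reduces to $(ii)\Rightarrow(i)$.

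Assuming $(ii)$, I would first extract a semigroup identity from the undirectedness requirement. Because the edge relation of $\Cay(S,C)$ is the union over $c\in C$ of those of $\Cay(S,\{c\})$, the graph $\Cay(S,C)$ is undirected for every nonempty $C$ if and only if each singleton graph $\Cay(S,\{c\})$ is. For a singleton this says that $cu=v$ implies $cv=u$, that is,
\[ c^2u=u \qquad\text{for all } c,u\in S. \]
Call this identity $(*)$; in particular $c^2$ is a left identity and $c^3=c$ for every $c$. Next I would use the span bound together with $\ell>1$ to control the shape of the graphs. If some component of some $\Cay(S,C)$ were not complete it would contain vertices $u,w$ with $d(u,w)=2$; as $\ell\ge 2$, every labelling satisfies $|f(u)-f(w)|\ge k_2$, whence $\lambda_{k_1,\dots,k_\ell}(\Cay(S,C))\ge k_2$. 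Fixing $k_1$ and letting $k_2\to\infty$ contradicts the bound $\le F(k_1)$, so for every nonempty $C$ the graph $\Cay(S,C)$ is a disjoint union of complete graphs.

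The crucial step is to upgrade this to combinatoriality via Theorem~\ref{t:span_combinatorial}. For any nonempty subsemigroup $T\le S$ and any nonempty $C\subseteq T$, the graph $\Cay(T,C)$ is the subgraph of $\Cay(S,C)$ induced on the vertex set $T$; as an induced subgraph of a disjoint union of complete graphs it is again such a union, and therefore $\lambda_{k_1,\dots,k_\ell}(\Cay(T,C))=(\omega-1)k_1\le(|S|-1)k_1$, where $\omega$ is the largest component size. Taking $F(k_1)=(|S|-1)k_1$ and the same $\ell$ shows that $S$ satisfies condition $(ii)$ of Theorem~\ref{t:span_combinatorial}, so $S$ is combinatorial. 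Finally I would combine combinatoriality with $(*)$: if $c\ne c^2$ for some $c$, then writing $e=c^2$ one checks from $(*)$ that $ee=e$, $ec=ce=c^3=c$ and $cc=e$, so $\{e,c\}$ is a subgroup isomorphic to $\Z_2$, contradicting combinatoriality. Hence $c=c^2$ for all $c$, and then $(*)$ gives $cu=c^2u=u$, i.e.\ $S$ is a right zero band.

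The main obstacle is the bridge in the last paragraph: Theorem~\ref{t:span_combinatorial} supplies span hypotheses over all subsemigroups of $S$, whereas $(ii)$ constrains only connection sets $C\subseteq S$. The argument therefore hinges on the observation that the disjoint-union-of-complete-graphs property is inherited by induced subgraphs, and hence by every $\Cay(T,C)$ with $T\le S$, which is precisely what lets Theorem~\ref{t:span_combinatorial} apply. Care is also needed to verify that $(*)$, coming purely from undirectedness, is exactly the extra ingredient that promotes ``combinatorial'' to ``right zero band''; this is the feature that separates the present theorem from Theorem~\ref{t:span_combinatorial}, since combinatorial semigroups such as left zero bands satisfy neither $(*)$ nor the undirectedness of all their Cayley graphs.
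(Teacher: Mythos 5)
Your argument is correct relative to the results it cites, and for the one substantive implication (ii)$\Rightarrow$(i) it takes a genuinely different route from the paper. The paper feeds every connection set $C$ into Theorem~\ref{t:span_cayley_union_of_complete}: taking $C=S$ shows $S$ is completely simple, taking $C=\{c\}$ and combining $cS=S$ with Lemma~\ref{l:Rees_matrix} forces $|I|=1$ so that $S\cong G\times\Lambda$ is a right group, and the left-ideal property of $Cc=\{(g^2,\mu)\}$ then forces $g^2=g^3$, i.e.\ $G$ trivial. You instead (a) extract the identity $c^2u=u$ directly from undirectedness of the singleton Cayley graphs, (b) show every $\Cay(S,C)$ has complete components by the $k_2\to\infty$ argument, (c) pass to induced subgraphs to verify the hypotheses of Theorem~\ref{t:span_combinatorial} and conclude $S$ is combinatorial, and (d) finish with the elementary observation that $c\ne c^2$ together with $c^2u=u$ yields the two-element subgroup $\{c^2,c\}$. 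The individual steps check out: $\Cay(T,C)$ really is the subgraph of $\Cay(S,C)$ induced on $T$, induced subgraphs inherit the union-of-complete-graphs property, and the multiplication table of $\{c^2,c\}$ under your identity is that of $\Z_2$. Your route trades the Rees matrix coordinates for Theorem~\ref{t:span_combinatorial} plus an identity argument; the paper's trades the identity for the structural machinery of Theorem~\ref{t:span_cayley_union_of_complete}. Both are legitimate given the earlier theorems.

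One caveat, which applies equally to the paper's own proof and which you should be aware of. Both arguments import the deduction ``bounded span $\Rightarrow$ disjoint union of complete graphs'' --- yours through Theorem~\ref{t:span_combinatorial}, the paper's through Theorem~\ref{t:span_cayley_union_of_complete} --- and in both cases the conclusion actually needed downstream (combinatoriality, respectively the left-ideal property of $Cc$) requires those complete components to carry all loops, which no span condition can detect. The group $S=\Z_2=\{e,g\}$ is the test case: $\Cay(\Z_2,\{e\})$, $\Cay(\Z_2,\{g\})$ and $\Cay(\Z_2,\{e,g\})$ are all undirected with minimum span at most $k_1$, so condition (ii) of the present theorem (and of Theorem~\ref{t:span_combinatorial}) holds with $F(k_1)=k_1$, yet $\Z_2$ is neither combinatorial nor a right zero band; the culprit is that $\Cay(\Z_2,\{g\})\cong K_2$ is complete but loopless. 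Your proof would break exactly at step (c), where you cite Theorem~\ref{t:span_combinatorial}, just as the paper's breaks where it cites Theorem~\ref{t:span_cayley_union_of_complete}; neither proof is internally at fault, but the chain of results both of you lean on has this loophole, so step (c)/the appeal to the earlier theorems is the place where a self-contained repair would have to be made.
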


The next theorem gives a
description of the structure
of all semigroups whose Cayley
graphs are disjoint unions of
complete graphs, where it is assumed
that the complete graphs contain
all loops. At the same
time it shows that this condition
is equivalent to certain
restrictions on the minimum span.
If $S$ is a semigroup with a
subset $C$, then the subsemigroup
generated by $C$ in $S$ is
denoted by~$\langle C \rangle$.
Several conditions equivalent to
the definition of a left group are
given in Section~\ref{s:preliminaries}
(see Lemma~\ref{l:right-groups}).

\begin{theorem}\label{t:span_cayley_union_of_complete}
For any semigroup $S$ and
any nonempty subset $C$ of $S$,
the following conditions are
equivalent:

\begin{itemize}
\item[\rm(i)]
$CS = S$, $\langle C \rangle$
is a completely simple semigroup,
and for each $c \in C$ the
set $C c$ is a left group
and a left ideal of
$\langle C \rangle$;

\item[\rm(ii)]
there exist a function $F:\N
\rightarrow \N$ and an integer
$\ell > 1$ such that, for all
$k_1, \dots, k_\ell \in \N$,
$\Cay(S, C)$ is undirected and
\begin{equation}
\lambda_{k_1, \dots, k_\ell}
(\Cay(S, C)) \le F(k_1);
\end{equation}

\item[\rm(iii)]
there exist a function $F:\N
\rightarrow \N$ and an integer
$\ell > 1$ such that, for all
$k_1, \dots, k_\ell \in \N$,
$\Cay(S, C)$ is undirected and
\begin{equation}
\lambda_{k_1, \dots, k_\ell}
(\Cay(S, C)) = F(k_1);
\end{equation}

\item[\rm(iv)]
there exists $\ell > 1$
such that, for all
$k_1, \dots, k_\ell \in \N$,
$\Cay(S, C)$ is undirected and
\begin{equation}\label{eq:lambda=(|Cc|_1)k_1}
\lambda_{k_1, \ldots, k_\ell}(\Cay(S, C)) =
(|Cc| - 1) k_1,
\end{equation}
where $c$ is a fixed
element in $C$;

\item[\rm(v)]
$\Cay(S, C)$
is undirected, and for every
integer $\ell > 1$ and all
$k_1, \dots, k_\ell \in \N$,
the equality
\begin{equation}\label{eq:complete:lambda=(|Cc|_1)k1}
\lambda_{k_1, \ldots, k_\ell}(\Cay(S, C)) =
(|Cc| - 1) k_1,
\end{equation}
holds for all elements
$c$ of $C$;

\item[\rm(vi)]
$\Cay(S, C)$ is a disjoint
union of complete graphs.
\end{itemize}
\end{theorem}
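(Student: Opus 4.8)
The plan is to establish the cycle of implications (i)$\Rightarrow$(vi)$\Rightarrow$(v)$\Rightarrow$(iv)$\Rightarrow$(iii)$\Rightarrow$(ii)$\Rightarrow$(i), since (vi) is the purely combinatorial condition that links naturally to both the structural statement (i) and the span formulas (ii)--(v). I expect the structural implication (i)$\Rightarrow$(vi) to be the main obstacle, so I would begin there. Assuming (i), the fact that $\langle C\rangle$ is completely simple and each $Cc$ is a left group that is a left ideal should let me analyse the edge set of $\Cay(S,C)$ directly: for a vertex $v$, its out-neighbours are exactly $Cv=\{cv:c\in C\}$, and the left-group and left-ideal structure forces these neighbourhoods to partition $S$ into blocks on which the adjacency relation is symmetric and complete. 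The condition $CS=S$ guarantees every vertex is covered. I would verify that two vertices lie in the same block precisely when they have a common in-neighbour under $C$, and that within a block the induced subgraph is a complete graph with all loops, using the idempotent and simplicity structure to rule out edges between distinct blocks.

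\begin{proofthm}
(Sketch.)
The implication (vi)$\Rightarrow$(v) is the heart of the labelling argument and should follow from the general principle already exploited in Theorems~\ref{t:span_combinatorial} and~\ref{t:span_right_zero_band}: if $\Cay(S,C)$ is a disjoint union of complete graphs, then every pair of distinct adjacent vertices is at distance $1$, and no two vertices are ever at distance $t$ for any $t>1$ (vertices in different components have infinite distance). Hence for any $\ell>1$ only the constraint governed by $k_1$ is active, and an $L(k_1,\dots,k_\ell)$-labelling on a complete graph on $m$ vertices has minimum span exactly $(m-1)k_1$. Since the component containing a given vertex with in-neighbour structure determined by $C$ has size $|Cc|$, the formula \eqref{eq:complete:lambda=(|Cc|_1)k1} drops out, and the value is independent of the choice of $c\in C$ because all components arising from $C$ have the same cardinality under the hypothesis. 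The implications (v)$\Rightarrow$(iv)$\Rightarrow$(iii)$\Rightarrow$(ii) are then immediate weakenings: (v) gives (iv) by fixing one $c$, (iv) gives (iii) by setting $F(k_1)=(|Cc|-1)k_1$, and (iii) trivially gives (ii).
\end{proofthm}

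The remaining implication (ii)$\Rightarrow$(i) is where I would expect the second real difficulty, and I would argue its contrapositive. Assuming (i) fails, I would construct, for a suitable family of labels $k_1,\dots,k_\ell$, a Cayley graph $\Cay(S,C)$ that is undirected but is \emph{not} a disjoint union of complete graphs, so that it contains two vertices at some distance $t$ with $1<t\le\ell$. On such a graph the higher $k_t$ constraints become active, and by letting $k_t$ grow the minimum span $\lambda_{k_1,\dots,k_\ell}(\Cay(S,C))$ cannot be bounded by any function $F(k_1)$ of $k_1$ alone, contradicting (ii). The delicate point here is to produce, from the failure of the structural condition (i), an undirected Cayley graph exhibiting a genuine distance-$t$ pair with $t>1$; I anticipate using the characterisations of left groups collected in Lemma~\ref{l:right-groups} together with the interplay between $CS=S$, complete simplicity of $\langle C\rangle$, and the left-ideal condition on each $Cc$ to isolate exactly which of these fails and to exhibit the offending path. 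Once such a configuration is in hand, the span blow-up in the higher coordinates is routine, completing the cycle.
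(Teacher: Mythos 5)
Your overall architecture is the same as the paper's: everything is routed through condition (vi), with (i)$\Leftrightarrow$(vi) as the structural core and (vi)$\Rightarrow$(v) as the labelling computation, and the step from a bounded span to ``disjoint union of complete graphs'' obtained by blowing up $k_2$ past $F(k_1)$ at a distance-$2$ pair. But as written there is a genuine gap: the two implications that carry all the mathematical content, (i)$\Rightarrow$(vi) and the structural half of (ii)$\Rightarrow$(i), are left as announcements rather than arguments. For (ii)$\Rightarrow$(i) in particular, your contrapositive plan never explains how the failure of (i) produces a distance-$2$ pair. The tool you are missing is the characterisation of undirected Cayley graphs of semigroups (Proposition~\ref{p:Cayley_undirected}): once $\Cay(S,C)$ is undirected, $CS=S$ and the complete simplicity of $\langle C\rangle$ come for free, so the only way (i) can fail is in the left-group/left-ideal conditions on $Cc$; one then needs Lemma~\ref{l:connected_component} (connected components are the cosets $\langle C\rangle x$) and the Rees matrix coordinates of Lemma~\ref{l:Rees_matrix} to show $Cc=\langle C\rangle c=T_{*\lambda_c}$, which is exactly a left ideal and a left group. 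Without naming these ingredients, ``isolate exactly which of these fails and exhibit the offending path'' is not a proof. Similarly, for (i)$\Rightarrow$(vi) the paper's argument is a genuine computation in $M(G;I,\Lambda;P)$ (showing $C_{i_2*}t_1c$ is a left ideal of the group $T_{i_2\lambda_c}$, hence equal to it), and your ``the left-group and left-ideal structure forces these neighbourhoods to partition $S$ into blocks'' skips precisely this step.

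There is also a concrete error in your (vi)$\Rightarrow$(v) sketch: it is not true that ``all components arising from $C$ have the same cardinality'' --- Example~\ref{ex:left_zero_band_with_zero_adjoined} in the paper gives $K_{|B|}\cup K_1$. What is true, and what the formula $(|Cc|-1)k_1$ actually rests on, is that the component containing $c$ is exactly $Cc$ (so has $|Cc|$ vertices, and $|Cc|$ is independent of $c\in C$ because $Cc=T_{*\lambda_c}$ has cardinality $|G|\,|I|$), while every other component is of the form $Ccs$ and hence has \emph{at most} $|Cc|$ vertices. You need both the lower bound from the component of $c$ and the upper bound on all other components to get equality; your sketch supplies neither.
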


\begin{example}\label{ex:left_groups_are_essential}
{\rm
The requirement that $C c$ be a left group
in condition~(i) of
Theorem~\ref{t:span_cayley_union_of_complete}
cannot be omitted.
For example, let $\Z_{3} = \{e, g, g^{2}\}$
be the cyclic group of order $3$,
and let $C = \{e, g\}$. Then
$\langle C \rangle = \Z_3$ is
a completely simple semigroup and
$C\Z_3 = \Z_3$, but $\Cay(\Z_3, C)$ is
not a disjoint union of complete graphs.
}\end{example}

Suppose that $G$ is a group, $I$ and
$\Lambda$ are nonempty sets, and
$P=[p_{\lambda i}]$ is a
\mbox{$(\Lambda\times I)$-matrix} with
entries $p_{\lambda i}\in G$ for all
$\lambda\in\Lambda$, $i\in I$. The
\textit{Rees matrix semigroup}
$M(G; I, \Lambda; P)$ over $G$ with
\textit{sandwich-matrix} $P$ consists of
all triples $(h; i, \lambda)$, where $i \in
I$, $\lambda\in\Lambda$, and $h\in G$,
with multiplication defined by the rule
\begin{equation}\label{eq:Rees_matrix_product}
(h_1; i_1, \lambda_1)(h_2; i_2, \lambda_2)
=(h_1 p_{\lambda_1 i_2} h_2; i_1, \lambda_2).
\end{equation}

A semigroup is said to be
\textit{completely simple} if it has no
proper ideals and has an idempotent
minimal with respect to the natural
partial order defined on the set of all
idempotents by $e \le f \Leftrightarrow
e f = f e = e$.
It is well known that every completely
simple semigroup is isomorphic to a Rees
matrix semigroup $M(G; I, \Lambda; P)$ over
a group $G$ (see \cite{Howie:fst},
Theorem~3.3.1). Conversely, every
semigroup $M(G; I, \Lambda; P)$ is
completely simple.

A band $B$ is called a
\textit{left zero band}, if
it satisfies the identity $x y = x$,
for all $x, y \in B$. Recall also that band $B$
is called a \textit{right zero band},
if it satisfies the identity
$x y = y$, for all $x, y \in B$.

\begin{example}\label{ex:coincidence_is_essential}
{\rm
The requirement that $C c$ be a left ideal of
$\langle C \rangle$ in condition~(i) of
Theorem~\ref{t:span_cayley_union_of_complete}
cannot be omitted,
even if it is still required to be a left
group. For example, let $I = \{i_1, i_2\}$ be
a left zero band, and let $G = \{e, g\}$
be the cyclic group of order $2$.
Then the direct
product $G \times I$ is isomorphic
to the completely simple semigroup
$M(G; I, \Lambda; P)$, where
$\Lambda = \{\lambda\}$, and
$P = [e, e]$.
Clearly, $G \times I$ is generated by
the set $C = \{(g, i_1), (g, i_2)\}$.
Take $c = (g, i_1)$. Then
$C c = \{ (e, i_1), (e, i_2)\}$ is a
left group, but $\Cay(G \times I, C)$ is
not a disjoint union of complete graphs.
}\end{example}

The next example shows that the
complete graphs that occur in the
disjoint union of condition~(vi) of
Theorem~\ref{t:span_cayley_union_of_complete}
may have different cardinalities.
For $n \in \N$, the complete graph
with $n$ vertices is denoted by~$K_n$.

\begin{example}\label{ex:left_zero_band_with_zero_adjoined}
{\rm
Let $B$ be a finite left zero band,
and let $B^0 = B \cup \{\theta\}$ be the
semigroup obtained by adjoining zero
$\theta$ to $B$. Then it is easily seen
that
\begin{equation}
\Cay(B^0, B) = K_{|B|} \cup K_1
\end{equation}
is a disjoint union of two complete
graphs, where $K_{|B|}$ and $K_1$ are
the subgraphs induced in $\Cay(B^0, B)$
by the sets $B$ and $\{\theta\}$,
respectively.
}\end{example}

The last main result of this paper
describes all graphs whose minimum
spans satisfy some of the restrictions
that occur in the previous theorems.

\begin{theorem}             \label{t:span=F(k1)}
For any finite undirected graph
$\Gamma=(V, E)$, the following
conditions are equivalent:

\begin{itemize}
\item[\rm(i)]
there exist a function $F: \N
\rightarrow \N$ and an
integer $\ell > 1$
such that, for all
$k_1, \dots, k_\ell \in \N$,
the minimum span of $\Gamma$
satisfies the inequality
\begin{equation}\label{eq:lambda<=F(k1)}
\lambda_{k_1, \dots, k_\ell}(\Gamma)
\le F(k_1);
\end{equation}

\item[\rm(ii)]
there exist a function
$F: \N \rightarrow \N$
and an integer $\ell > 1$
such that, for all
$k_1, \dots, k_\ell \in \N$,
the minimum span of $\Gamma$ is
determined by the formula
\begin{equation}\label{eq:lambda=F(k1)}
\lambda_{k_1, \ldots, k_\ell}(\Gamma) =
F(k_1);
\end{equation}

\item[\rm(iii)]
$\Gamma$ is a disjoint union of complete
graphs.
\end{itemize}
\end{theorem}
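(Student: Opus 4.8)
The plan is to prove Theorem~\ref{t:span=F(k1)} by establishing the cycle of implications $(iii)\Rightarrow(ii)\Rightarrow(i)\Rightarrow(iii)$. The implications $(ii)\Rightarrow(i)$ is immediate, since an exact formula for the minimum span is in particular an upper bound (take the same $F$ and $\ell$). The substantive work lies in proving $(iii)\Rightarrow(ii)$ and the converse direction $(i)\Rightarrow(iii)$.

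For $(iii)\Rightarrow(ii)$, I would start from a graph $\Gamma$ that is a disjoint union of complete graphs, say with components $K_{n_1}, K_{n_2}, \ldots, K_{n_m}$, and let $n=\max_j n_j$ be the order of the largest component. The key observation is that within any single $K_{n_j}$ every pair of distinct vertices is at distance $1$, while any two vertices in different components are at infinite distance and hence impose no constraint. Consequently, for any $\ell>1$, the labels $k_2,\ldots,k_\ell$ play no role whatsoever: no two vertices are ever at distance $2,3,\dots,\ell$. An $L(k_1,\ldots,k_\ell)$-labelling therefore reduces to assigning labels to each clique so that labels within a clique differ pairwise by at least $k_1$; distinct cliques can freely reuse the same labels. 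The minimum span is thus achieved by labelling the largest clique with $1, 1+k_1, 1+2k_1, \ldots, 1+(n-1)k_1$, giving span exactly $(n-1)k_1$. Setting $F(k_1)=(n-1)k_1$, which depends only on $\Gamma$ through $n$ and is a genuine function of $k_1$ alone, yields~\eqref{eq:lambda=F(k1)}.

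The harder direction is $(i)\Rightarrow(iii)$, where I must show that the mere \emph{existence} of a function $F$ and an $\ell>1$ bounding the span in terms of $k_1$ alone forces $\Gamma$ to be a disjoint union of complete graphs. I would argue by contrapositive: suppose $\Gamma$ is \emph{not} such a disjoint union. Since each connected component of $\Gamma$ that is complete contributes a clique, failure to be a disjoint union of complete graphs means some connected component is not itself a complete graph. In a connected non-complete graph there exist two vertices $u,v$ at distance exactly $2$ (take a shortest path between a pair of nonadjacent vertices; its two endpoints and the middle vertex realise distance $2$). The plan is to exploit this pair of vertices at distance $2$ to violate any purported bound $F(k_1)$: since $d(u,v)=2\le\ell$, the constraint $|f(u)-f(v)|\ge k_2$ is active, and by choosing $k_2$ arbitrarily large while keeping $k_1$ fixed, the span is forced to exceed $F(k_1)$. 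This is where I expect the main obstacle to lie, because I must rule out the degenerate possibility that the $\ell$ guaranteed by condition~(i) is too small to ``see'' the distance-$2$ constraint—but since $\ell>1$ we always have $\ell\ge 2$, so distance $2$ is indeed covered, and the argument goes through.

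To make the final step rigorous I would fix $k_1=1$ (or any fixed value), let $k_3=\cdots=k_\ell=1$, and let $k_2$ range over $\N$. For the pair $u,v$ with $d(u,v)=2$, every valid labelling satisfies $|f(u)-f(v)|\ge k_2$, so the span is at least $k_2$, whence $\lambda_{k_1,\ldots,k_\ell}(\Gamma)\ge k_2$. Choosing $k_2>F(k_1)$ contradicts~\eqref{eq:lambda<=F(k1)}, completing the contrapositive. This closes the cycle and establishes the equivalence of all three conditions. The only care needed is to confirm that the labelling set $\N$ permits arbitrarily large spans, which it does, and that distinct connected components, being at infinite distance, never interfere—so the obstruction is purely local to a single non-complete component.
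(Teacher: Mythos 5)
Your proposal is correct and follows essentially the same route as the paper: the easy implication (ii)$\Rightarrow$(i), the contrapositive of (i)$\Rightarrow$(iii) via a pair of vertices at distance $2$ and the choice $k_2 > F(k_1)$, and the explicit labelling of the largest clique with span $(n-1)k_1$ for (iii)$\Rightarrow$(ii). The only cosmetic difference is that you close the cycle as (iii)$\Rightarrow$(ii)$\Rightarrow$(i)$\Rightarrow$(iii) rather than proving the paper's three implications separately, which is logically equivalent.
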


We conclude this section with examples
concerning the following straightforward
upper bound on the minimum span of any
undirected graph $\Gamma = (V, E)$:
\begin{equation}           \label{eq:upper_bound_underlying}
\lambda_{k_1, \ldots, k_\ell}(\Gamma)
\le
(|V| - 1) \max\{k_1, \ldots, k_\ell\}.
\end{equation}
This inequality holds because
the labelling that assigns
$(i - 1) \max\{k_1, \ldots, k_\ell\} + 1$
to the $i$-th vertex (for any order
of vertices) of $\Gamma$ is an
$L(k_1, \ldots, k_\ell)$-labelling
with span
$(|V| - 1) \max\{k_1, \ldots, k_\ell\}.$

The \textit{underlying undirected graph}
$\Gamma^*$ of a directed graph
$\Gamma = (V, E)$ is the graph with
the same set $V$ of vertices and
with all undirected edges $\{u, v\}$
such that $(u, v)$ or $(v, u)$ is
a directed edge
of $\Gamma$. The following examples
show that there exist combinatorial
semigroups such that, for the
underlying undirected graphs of
their Cayley graphs, the upper bound
(\ref{eq:upper_bound_underlying})
is sharp.

\begin{example}\label{ex:left_zero_band}
{\rm
Let $B$ be a left zero band, and let
$C$ be a nonempty subset of $B$. Then
\begin{equation}\label{eq:left_zero_band_1}
\lambda_{k_1, \ldots, k_\ell}(\Cay(B, C)^*) =
k_1(|C| - 1) + \max\{k_1, k_2\}
+ k_2(|B| - |C| - 1).
\end{equation}
Indeed, it follows from the definition
of a left zero band that the set of
edges of $\Cay(B, C)$ is the set of
all edges $(b, c)$, for all $b \in B$,
$c \in C.$ Therefore, for $x, y \in B$,
$x \ne y$, we get
$$d(x, y) = \left\{
\begin{array}{ll}
1 & \mbox{ if } x \in C \mbox{ or } y \in C \\
2 & \mbox{ if } x, y \in B \setminus C.
\end{array}
\right. $$
We can assign the labels $1$, $1 + k_1$,
$\dots$, $1 + k_1(|C| - 1)$ to the
elements of $C$, and the labels
$1 + k_1(|C| - 1) + \max\{k_1, k_2\}$,
$1 + k_1(|C| - 1) + \max\{k_1, k_2\} + k_2$,
$\dots$,
$1 + k_1(|C| - 1) + \max\{k_1, k_2\}
+ k_2(|B| - |C| - 1)$
to the elements of $B \setminus C$.
This defines an
$L(k_1, \dots, k_\ell)$-labelling
of $\Cay(B, C)$. The span of this
labelling is equal to the right-hand
side of (\ref{eq:left_zero_band_1}),
and it is clear that this is
the minimum span. In the case
where $k_1 = k_2$, equality
(\ref{eq:left_zero_band_1})
shows that the upper bound
(\ref{eq:upper_bound_underlying})
is sharp.
}\end{example}

The following example covers all
combinatorial semigroups with zero.
Notice that, for any combinatorial
semigroup $S$, the semigroup $S^0 = S \cup
\{\theta\}$ with zero $\theta$ adjoined
in a standard fashion is combinatorial
as well.

\begin{example}\label{ex:semigroup_with_zero}
{\rm
Let $S$ be a finite semigroup with
zero $\theta$, and let $C = \{\theta\}$.
Then  the set of edges of
$\Cay(S, C)$ coincides with the set
$\{(x,\theta) \mid x \in S\}$. Therefore
\begin{equation}\label{eq:Cayley_semigroup_with_zero_1}
\lambda_{k_1, \ldots, k_\ell}(\Cay(S, C)^*) =
k_1 + (|S| - 2) k_2.
\end{equation}
Indeed, for $x, y \in B$, $x \ne y$,
we have
$$d(x, y) = \left\{
\begin{array}{ll}
1 & \mbox{ if } x = \theta \mbox{ or } y = \theta \\
2 & \mbox{ if } x, y \in S \setminus \{\theta\}.
\end{array}
\right. $$
Therefore we can assign the label $1$ to
$\theta$ and the labels
$1 + k_1$, $1 + k_1 + k_2$, $1 + k_1 + 2
k_2$, $\dots$, $1 + k_1 + (|S| - 2) k_2$
to the nonzero elements of $S$.
This defines an
$L(k_1, \dots, k_\ell)$-labelling
of $\Cay(S, C)$ with the minimum
possible span. This shows that
$\lambda_{k_1, \dots, k_\ell}(\Cay(S, C))$
is equal to the right-hand side of
(\ref{eq:Cayley_semigroup_with_zero_1}).
In the case where $k_1 = k_2$, equality
(\ref{eq:Cayley_semigroup_with_zero_1})
shows that the bound
(\ref{eq:upper_bound_underlying})
is sharp.
}\end{example}

\section{Preliminaries}
\label{s:preliminaries}

We use standard terminology on graphs,
Cayley graphs, groups and semigroups,
and refer to
\cite{Howie:fst,Kelarev:gaa,Knauer2011}
for more detailed explanations. The
following notation and definitions are
required for the proofs. A
semigroup $G$ is \textit{periodic} if,
for each $g \in G$, there exist positive
integers $m, n$ such that $g^m = g^{m +
n}$. Obviously, every finite semigroup
is periodic. A semigroup is said to be
\textit{right} (\textit{left})
\textit{simple} if it has no proper
\textit{right} (\textit{left})
\textit{ideals}. A semigroup is
\textit{left} (\textit{right})
\textit{cancellative} if $x y = x z$
(respectively, $y x = z x$) implies $y =
z$, for all $x, y, z\in S$. A semigroup
is called a \textit{right}
(\textit{left}) \textit{group} if it is
right (left) simple and left (right)
cancellative.

\begin{lemma}                     \label{l:right-groups}
{\rm(\cite[Lemma~3.1]{KelarevPraeger2003})}
For any periodic semigroup $S$,
the following are equivalent:
\begin{itemize}
\item[\rm(i)]
$S$ is right (left) simple;
\item[\rm(ii)]
$S$ is a right (left) group;
\item[\rm(iii)]
$S$ is isomorphic to the direct product of
a right (left) zero band and a group;
\item[\rm(iv)]
$S$ is a union of several of its left (right)
ideals and each of these ideals is a group.
\end{itemize}
\end{lemma}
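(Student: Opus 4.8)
The plan is to prove only the versions stated without parentheses (the ``right'' versions); the ``left'' versions then follow verbatim by applying the result to the opposite semigroup $S^{\mathrm{op}}$, under which right simple corresponds to left simple, right group to left group, right zero band to left zero band, and left ideals to right ideals. I would prove the equivalences along the cycle (i) $\Rightarrow$ (iv) $\Rightarrow$ (iii) $\Rightarrow$ (ii) $\Rightarrow$ (i). Two of these are immediate: (ii) $\Rightarrow$ (i) holds because a right group is right simple by definition, and (iii) $\Rightarrow$ (ii) is a direct check that a direct product $R\times G$ of a right zero band $R$ and a group $G$ is right simple and left cancellative.

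For (i) $\Rightarrow$ (iv), first note that in a right simple semigroup $aS$ is a nonempty right ideal, so $aS=S$ for every $a$; thus every equation $ax=b$ is solvable. Periodicity guarantees idempotents, and for any idempotent $e$ the relation $eS=S$ gives, for each $a$, some $y$ with $a=ey$, whence $ea=e(ey)=ey=a$; so every idempotent is a left identity and consequently the set $E$ of idempotents satisfies $ef=f$, i.e. $E$ is a right zero band. Next, since $a\in aS$ we may write $a=ax$ and iterate to get $a=ax^{n}$ for all $n$; periodicity makes some $x^{n}=e$ idempotent, so $ae=a$ and hence $a\in Se$. For a fixed idempotent $e$ the subset $Se$ is a left ideal on which $e$ is a two-sided identity (from $a=se$ one gets $ae=se^{2}=a$), and every $a\in Se$ has a right inverse in $Se$ (solve $ax=e$ and replace $x$ by $xe$); since a monoid in which every element has a right inverse is a group, each $Se$ is a group. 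Therefore $S=\bigcup_{e\in E}Se$ is a union of left ideals each of which is a group, which is (iv).

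For (iv) $\Rightarrow$ (iii), I would first show the group left ideals may be taken pairwise disjoint: if group left ideals $L_i$ and $L_j$ meet in $a$, then the inverse $a'$ of $a$ in $L_i$ gives the identity $e_i=a'a\in SL_j\subseteq L_j$, and since a group contains a unique idempotent we get $e_i=e_j$ and hence $L_i=Se_i=Se_j=L_j$. The idempotents of $S$ are then exactly the identities $e_i$, and they form a right zero band: for idempotents $e,f$, using that $e$ and $f$ are the identities of their respective groups, one gets $efe=e(fe)=fe$ and $fef=f(ef)=ef$, so $(ef)(ef)=efef=(efe)f=(fe)f=fef=ef$; thus $ef$ is an idempotent lying in $Sf$, which forces $ef=f$. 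As in the first part this makes every idempotent a left identity. Fixing $e_0\in E$ and writing $G=Se_0$, I would then define $\phi\colon S\to E\times G$ by $\phi(a)=(e(a),\,ae_0)$, where $e(a)$ is the unique idempotent with $a\in Se(a)$, and verify that $\phi$ is an isomorphism; the identities $e(ab)=e(b)$, $(ae_0)(be_0)=abe_0$, and the recovery $a=(ae_0)e(a)$ are routine once one knows that $E$ is a right zero band and the $e$'s are left identities.

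The main obstacle is the passage from these structural hypotheses to genuine group structure. Concretely, the heart of (i) $\Rightarrow$ (iv) is recognising that right simplicity together with periodicity forces idempotents to be left identities and each element to possess an idempotent right identity; and the heart of (iv) $\Rightarrow$ (iii) is the short idempotent computation showing that $ef$ is idempotent, which is exactly what produces the right zero band coordinate of the direct product from an abstract family of group left ideals. Once these two facts are in hand, the remaining verifications are mechanical.
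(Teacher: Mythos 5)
Your proof is correct, but there is no in-paper argument to compare it against: the paper does not prove Lemma~\ref{l:right-groups} at all, it simply quotes it with a citation to \cite[Lemma~3.1]{KelarevPraeger2003}. Judged on its own, your argument is a complete, self-contained version of the classical proof. The reduction to the ``right'' case via the opposite semigroup is legitimate, and the cycle (i)$\Rightarrow$(iv)$\Rightarrow$(iii)$\Rightarrow$(ii)$\Rightarrow$(i) checks out: in (i)$\Rightarrow$(iv) the chain $aS=S$, hence idempotents are left identities, $a=ax^{n}$ with $x^{n}$ idempotent by periodicity, $e$ a two-sided identity on the left ideal $Se$, and ``a monoid in which every element has a right inverse is a group'' is sound; in (iv)$\Rightarrow$(iii) the disjointness reduction (where $L_i=Se_i$ holds because $L_i$ is a left ideal containing its group identity $e_i$ and $ae_i=a$ on $L_i$) and the computation $(ef)^{2}=efef=(efe)f=(fe)f=fef=ef$ with $ef\in Sf$ forcing $ef=f$ are exactly right, and your map $\phi(a)=(e(a),\,ae_0)$ is indeed an isomorphism. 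One misattribution is worth repairing: in (iv)$\Rightarrow$(iii) you assert that ``as in the first part'' every idempotent is a left identity, but in the first part this was deduced from $eS=S$, which is not available under hypothesis (iv) alone. The fact is still true and needs only one line: for $a\in S$ put $e'=e(a)$, so $e'a=a$ since $e'$ is the identity of the group $Se'$ containing $a$, and then $ea=e(e'a)=(ee')a=e'a=a$ by the right zero identity $ee'=e'$ you have just established. This step is load-bearing --- you use $e_0b=b$ when verifying that $\phi$ is multiplicative --- so it should be spelled out rather than cited from part one. Finally, a point in your favour: periodicity enters your proof only in (i)$\Rightarrow$(iv), which is precisely where it must, since for instance the Baer--Levi semigroup is right simple but idempotent-free, so (i) does not imply (ii) without some finiteness hypothesis.
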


A band $B$ is called a
\textit{rectangular band}
if it satisfies the identity
$x y x = x$, for all $x, y \in B$.

\begin{lemma}            \label{l:rectangular=left_x_right}
{\rm(\cite[Theorem~1.1.3]{Howie:fst})}
Let $B$ be a rectangular band.
Then there exists a left zero band
$L$ and a right zero band $R$ such
that $B$ is isomorphic to the
direct product $L \times R$.
\end{lemma}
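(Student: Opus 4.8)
The plan is to upgrade the defining rectangular identity $xyx=x$ to the stronger \emph{mediality} identity $xyz=xz$ (for all $x,y,z\in B$), and then to realise the factors $L$ and $R$ as the two families of Green-type classes of $B$. Once mediality is in hand, every product collapses, $a_1a_2\cdots a_n=a_1a_n$, and the direct-product decomposition falls out almost mechanically; so the one genuinely algebraic point is the derivation of mediality itself.

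First I would prove that a rectangular band satisfies $xyz=xz$. Put $u=xz$ and $w=xyz$. Applying the identity $aba=a$ with $a=u$ and $b=w$ gives $uwu=u$. On the other hand, using $xzx=x$ one computes $uw=(xz)(xyz)=(xzx)yz=xyz=w$, and using $zxz=z$ one computes $wu=(xyz)(xz)=xy(zxz)=xyz=w$; hence $uwu=(uw)u=wu=w$. Comparing the two evaluations of $uwu$ yields $u=w$, that is $xz=xyz$, as required.

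Next I would introduce two relations on $B$: $a\,\mathcal{R}\,b$ iff $ab=b$ and $ba=a$, and $a\,\mathcal{L}\,b$ iff $ab=a$ and $ba=b$. Using mediality it is routine to check that each is a congruence (for instance $a\,\mathcal{R}\,b$ gives $ac=abc=bc$ on the right and $(ca)(cb)=cb$, $(cb)(ca)=ca$ on the left). The decisive observations are $ab\,\mathcal{R}\,a$ and $ab\,\mathcal{L}\,b$ for all $a,b\in B$: indeed $(ab)a=aba=a$ together with $a(ab)=ab$ gives the first, while $(ab)b=ab$ together with $b(ab)=bab=b$ gives the second. Consequently the quotient $L:=B/\mathcal{R}$ satisfies $[a][b]=[ab]=[a]$ and so is a left zero band, while $R:=B/\mathcal{L}$ satisfies $[a][b]=[ab]=[b]$ and so is a right zero band.

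Finally I would define $\phi:B\to L\times R$ by $\phi(a)=([a]_{\mathcal{R}},[a]_{\mathcal{L}})$ and check it is an isomorphism. It is a homomorphism since $[ab]_{\mathcal{R}}=[a]_{\mathcal{R}}$ and $[ab]_{\mathcal{L}}=[b]_{\mathcal{L}}$, so $\phi(ab)=([a]_{\mathcal{R}},[b]_{\mathcal{L}})=\phi(a)\phi(b)$; it is injective because $\phi(a)=\phi(b)$ forces $a\,\mathcal{R}\,b$ and $a\,\mathcal{L}\,b$, whence $a=ab=b$; and it is surjective because an arbitrary pair $([x]_{\mathcal{R}},[y]_{\mathcal{L}})$ equals $\phi(xy)$. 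The main obstacle, and essentially the only non-bookkeeping step, is the mediality identity above: recognising the $uwu$ comparison is what makes the rest routine, after which the congruence checks and the verification that $\phi$ is an isomorphism are direct consequences of $xyz=xz$.
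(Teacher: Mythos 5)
Your proof is correct. The mediality derivation is sound: $uwu=u$ by the rectangular identity, while $uw=(xzx)yz=xyz=w$ and $wu=xy(zxz)=xyz=w$ give $uwu=w$, so $xz=xyz$; the relations $\mathcal{R}$ and $\mathcal{L}$ are indeed congruences (transitivity, which you wave through as routine, does check out: $ab=b$, $ba=a$, $bc=c$, $cb=b$ yield $ac=abc=bc=c$ and $ca=cba=ba=a$); the key facts $ab\,\mathcal{R}\,a$ and $ab\,\mathcal{L}\,b$ are verified correctly; and the homomorphism, injectivity and surjectivity checks for $\phi$ all go through. Note, however, that the paper offers no proof of this lemma at all --- it simply cites Theorem~1.1.3 of Howie --- so the comparison is with the standard argument there, and yours is a genuinely different route. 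Howie's proof fixes a basepoint $a\in B$ and shows $x\mapsto(xa,ax)$ is an isomorphism of $B$ onto $Ba\times aB$, verifying directly that $Ba$ is a left zero band and $aB$ a right zero band; this is shorter and avoids congruences entirely. Your version is basepoint-free and identifies the factors canonically as the quotients $B/\mathcal{R}$ and $B/\mathcal{L}$ by what are in fact Green's relations, at the cost of the extra congruence bookkeeping. One further point of contrast with the paper: immediately after the lemma the authors remark that the identity $xyz=xz$ \emph{follows} from the decomposition, whereas you prove mediality first and derive the decomposition from it; both directions are legitimate, and yours has the advantage of being self-contained.
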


It is very well known, and also
follows immediately from
Lemma~\ref{l:rectangular=left_x_right},
that every rectangular band $B$
satisfies the identity $x y z = x z$,
for all $x, y, z \in B$.

Let $G$ be a group, $T=M(G; I, \Lambda; P)$,
and let $i \in I$, $\lambda \in \Lambda$.
Then we put
$$T_{* \lambda} = \{(h; j, \lambda) \mid h \in G, j \in I\},$$
$$T_{i *}=\{(h; i, \mu) \mid h \in G, \mu \in \Lambda\},$$
$$T_{i \lambda}=\{(h; i, \lambda) \mid h \in G\}.$$

\begin{lemma}                  \label{l:Rees_matrix}
{\rm(\cite[Lemma~3.2]{KelarevPraeger2003})}
Let $G$ be a group, and let
$T = M(G;I,\Lambda; P)$ be a
completely simple semigroup.
Then, for all $i, j \in I$,
$\lambda, \mu \in \Lambda$,
and $t = (h; i, \lambda) \in T$,
\begin{itemize}
\item[\rm(i)]
the set $T_{* \lambda}$ is a minimal
nonzero left ideal of $T$;
\item[\rm(ii)]
the set $T_{i *}$ is a minimal
nonzero right ideal of $T$;
\item[\rm(iii)]
$T t = T_{* \mu} t = T_{* \lambda}$;
\item[\rm(iv)]
$t T = t T_{j *} = T_{i *}$;
\item[\rm(v)]
$t \in T t \cap t T = T_{i\lambda}$;
\item[\rm(vi)]
the set $T_{i\lambda}$ is a
left ideal of $T_{i *}$ and a
right ideal of $T_{* \lambda}$;
\item[\rm(vii)]
the set $T_{i\lambda}$ is a maximal
subgroup of $T$ isomorphic to $G$;
\item[\rm(viii)]
each maximal subgroup of $T$
coincides with $T_{j \mu}$,
for some $j \in I$,
$\mu \in \Lambda$;
\item[\rm(ix)]
$M(G; I, \Lambda; P)$ is
a right (left) group if
and only if $|I| = 1$
(respectively, $|\Lambda| = 1$);
\item[\rm(x)]
if $T = M(G; I, \Lambda; P)$,
then each $T_{* \lambda}$ is
a left group, and each
$T_{i *}$ is a right group.
\end{itemize}
\end{lemma}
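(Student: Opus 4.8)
The plan is to derive all ten parts from the multiplication rule \eqref{eq:Rees_matrix_product}, whose single decisive feature is that in a product $(h_1;i_1,\lambda_1)(h_2;i_2,\lambda_2)$ the first index $i_1$ is inherited from the left factor and the second index $\lambda_2$ from the right factor, while the group entry is $h_1 p_{\lambda_1 i_2} h_2$. Since $G$ is a group and each sandwich entry $p_{\lambda i}$ is invertible, for fixed indices this group entry can be made equal to any prescribed element of $G$ by varying one of $h_1, h_2$. These two facts — index inheritance and surjectivity of the group coordinate — are the engine behind everything.

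First I would dispatch (i)--(vi) by direct computation. Multiplying an arbitrary $(g;k,\mu)$ on the left of $t=(h;i,\lambda)$ yields $(g\, p_{\mu i}\, h;\, k,\lambda)$; as $g$ ranges over $G$ and $k$ over $I$ this produces exactly $T_{*\lambda}$, whence $Tt = T_{*\lambda}$, and since the value $\mu$ of the left factor's second index is irrelevant to the product, restricting it gives $T_{*\mu}t = T_{*\lambda}$ as well, which is (iii); the dual computation yields (iv). Closure of $T_{*\lambda}$ under left multiplication and of $T_{i*}$ under right multiplication (again read off from the inherited index) makes them one-sided ideals, and the identity $Tt = T_{*\lambda}$ for \emph{every} $t\in T_{*\lambda}$ forces minimality, giving (i) and (ii) (there being no zero, ``minimal nonzero'' just means minimal). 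Part (v) is then immediate, since $Tt\cap tT = T_{*\lambda}\cap T_{i*}$ is the set of triples with first index $i$ and second index $\lambda$, that is $T_{i\lambda}$, and $t$ itself lies there; the two containments in (vi) are one-line checks using the same index bookkeeping.

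For (vii) and (viii) I would first observe that within $T_{i\lambda}$ the product is $(h_1;i,\lambda)(h_2;i,\lambda) = (h_1 p_{\lambda i} h_2;\, i,\lambda)$, a copy of $G$ twisted by $p_{\lambda i}$, and that $h\mapsto (h\, p_{\lambda i}^{-1};\, i,\lambda)$ is a group isomorphism $G\to T_{i\lambda}$ whose identity is the idempotent $(p_{\lambda i}^{-1};\, i,\lambda)$; this gives (vii) apart from maximality. For maximality and for (viii) I would identify the sets $T_{j\mu}$ with the Green $\mathcal{H}$-classes: by (iii) and (iv) the principal left and right ideals of $t$ are $T_{*\lambda}$ and $T_{i*}$, so the $\mathcal{H}$-class of $t$ is their intersection $T_{i\lambda}$; since every $T_{j\mu}$ contains the idempotent $(p_{\mu j}^{-1};\, j,\mu)$, every $\mathcal{H}$-class is already a group, and a subgroup cannot leave its $\mathcal{H}$-class, so each maximal subgroup coincides with some $T_{j\mu}$. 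I expect this to be the main obstacle, since it is the only point at which one must invoke, rather than merely compute with, the structure theory: one needs that a subgroup is confined to a single $\mathcal{H}$-class and that an idempotent-containing $\mathcal{H}$-class is a maximal subgroup.

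Finally, for (ix) I would read off from (ii) that $T_{i*}$ is a proper right ideal whenever $|I|>1$, so right simplicity forces $|I|=1$; conversely if $|I|=1$ then $T=T_{i*}$ has no proper right ideal. As $T$ is finite, hence periodic, Lemma~\ref{l:right-groups} equates right simple with right group, yielding the right-group half of (ix), and the dual argument through (i) yields the left-group half. Part (x) then follows by noting that $T_{*\lambda}$ is itself the Rees matrix semigroup $M(G; I, \{\lambda\}; P')$ built from the single row $\lambda$ of $P$, which is completely simple with a one-element second index set, so (ix) makes it a left group; symmetrically $T_{i*}=M(G; \{i\}, \Lambda; P'')$ is a right group.
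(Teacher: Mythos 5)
Your proposal is essentially correct, but note that the paper offers no proof of this lemma at all: it is quoted verbatim as Lemma~3.2 of \cite{KelarevPraeger2003}, so any argument you give is necessarily a reconstruction rather than a parallel of the paper's text. As a reconstruction it is sound and complete in parts (i)--(viii) and (x): the index-inheritance observation plus surjectivity of the group coordinate does carry (i)--(vi) by pure computation; your isomorphism $h \mapsto (h\,p_{\lambda i}^{-1}; i, \lambda)$ with idempotent identity $(p_{\lambda i}^{-1}; i, \lambda)$ is the standard one; and identifying the $T_{j\mu}$ with the $\mathcal{H}$-classes (via (iii), (iv) and $t \in Tt \cap tT$, so $T^1 t = Tt$ and $tT^1 = tT$) correctly yields both maximality in (vii) and the classification in (viii), granted the standard facts that a subgroup lies inside the $\mathcal{H}$-class of its identity and that an idempotent's $\mathcal{H}$-class is a maximal subgroup. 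One could even avoid Green theory here: if $f = (u; j, \mu)$ is the identity of a subgroup $H$, then $fx = x$ forces the first index of every $x \in H$ to be $j$ and $xf = x$ forces the second to be $\mu$, so $H \subseteq T_{j\mu}$ directly.

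The one genuine soft spot is in (ix), where you write ``As $T$ is finite, hence periodic'' before invoking Lemma~\ref{l:right-groups}. The lemma under discussion assumes only that $G$ is a group, not that $T$ is finite, and Lemma~\ref{l:right-groups} genuinely needs periodicity; a completely simple semigroup over an infinite group (say $G = \Z$) is not periodic, so your appeal fails in general. The repair is a one-line computation you already have the tools for: with $|I| = \{i\}$, left cancellativity follows since $(h; i, \lambda)(g_1; i, \mu_1) = (h; i, \lambda)(g_2; i, \mu_2)$ gives $(h p_{\lambda i} g_1; i, \mu_1) = (h p_{\lambda i} g_2; i, \mu_2)$, whence $\mu_1 = \mu_2$ and $g_1 = g_2$ by cancellation in $G$; combined with right simplicity (any right ideal containing $t$ contains $tT = T_{i*} = T$ by (iv)), this gives the right-group property from the definition, with the dual argument for left groups, and then (x) goes through exactly as you describe. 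With that substitution your proof is complete and works at the full stated generality.
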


It is well known that the Cayley graph
$\Cay(G, C)$ of a group $G$ is symmetric
or undirected if and only if $C = C^{-1}$,
that is $c \in C$ implies $c^{-1} \in C$.
Undirected Cayley graphs of
semigroups have been characterised in
\cite{Kelarev2002undirected}.

\begin{proposition}         \label{p:Cayley_undirected}
{\rm(\cite[Lemma~4]{Kelarev2002undirected})}
Let $S$ be a semigroup, and let $C$
be a subset of $S$, which generates
a periodic subsemigroup
$\langle C \rangle$. Then the
following conditions are equivalent:
\begin{itemize}
\item[\rm(i)]
$\Cay(S, C)$ is undirected;
\item[\rm(ii)]
$C S = S$, the semigroup
$\langle C \rangle = M(H; I, \Lambda; P)$
is completely simple and, for each
$(g; i, \lambda) \in C$
and every $j \in I$, there exists
$\mu \in \Lambda$ such that
$(p_{\lambda j}^{-1} g^{-1}
p_{\mu i}^{-1}; j, \mu) \in C$.
\end{itemize}
\end{proposition}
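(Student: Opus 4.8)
The plan is to prove the equivalence by establishing the two implications (i)$\Rightarrow$(ii) and (ii)$\Rightarrow$(i) separately, throughout identifying an edge $(u,v)$ of $\Cay(S,C)$ with the relation $v=cu$ for some $c\in C$, so that the out-neighbours of a vertex $u$ form the set $Cu$ and undirectedness means precisely that $v\in Cu$ if and only if $u\in Cv$. Write $T=\langle C\rangle$. I would first dispose of the two easy consequences of undirectedness. For $CS=S$: since $C\neq\emptyset$, every vertex $v$ has an out-neighbour $cv\in Cv$; undirectedness then yields an edge back to $v$, i.e.\ $v=c'(cv)\in CS$, so $CS=S$. The same idea applied to the edge $(c,c^{2})$ for a generator $c\in C$ gives $c=c''c^{2}\in Cc^{2}\subseteq CT$, and since every longer product already lies in $CT$, this shows $CT=T$; in particular the edges of $\Cay(S,C)$ incident with $T$ stay inside $T$, so $\Cay(T,C)$ is itself an undirected union of connected components.

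The hard part will be proving that $T$ is completely simple; everything else is a Rees-coordinate computation. Because $T$ is periodic it has a kernel (minimal ideal) $K$, which is completely simple, and I aim to show $T=K$. The key observation is that undirectedness forbids edges between $K$ and $T\setminus K$: if $d\in K$ were adjacent to $y$, then by undirectedness the edge $(d,y)$ exists, so $y=c''d\in TK\subseteq K$ because $K$ is an ideal, forcing $y\in K$. Consequently $K$ and $T\setminus K$ are both unions of components. If $T\neq K$, pick $d\in T\setminus K$; its out-neighbours $Cd$ are adjacent to $d$ and therefore lie in $T\setminus K$, and iterating this through any factorisation of an element of $T$ into generators gives $Td\subseteq T\setminus K$. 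But $Td$ is a nonempty left ideal, so it must meet the kernel $K$ (every nonempty left ideal contains a minimal left ideal, and these lie in $K$), a contradiction. Hence $T=K$ is simple, and a periodic simple semigroup is completely simple, so I may write $T=M(H;I,\Lambda;P)$.

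With $T=M(H;I,\Lambda;P)$ in hand, the remaining assertion of (i)$\Rightarrow$(ii) is a direct computation: given $c=(g;i,\lambda)\in C$ and $j\in I$, apply undirectedness to the edge $(u,cu)$ for a vertex $u=(h;j,\nu)\in T$. The reverse edge supplies $c'=(g'';k,\rho)\in C$ with $c'(cu)=u$; matching Rees coordinates forces $k=j$ and $g''=p_{\lambda j}^{-1}g^{-1}p_{\rho i}^{-1}$, which is exactly an element $(p_{\lambda j}^{-1}g^{-1}p_{\mu i}^{-1};j,\mu)\in C$ with $\mu=\rho$, as required. For the converse (ii)$\Rightarrow$(i), I would take any edge $(u,v)$ with $v=cu$, $c=(g;i,\lambda)\in C$, and produce the reverse edge. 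Since $CS=S$, write $u=(g_{0};i_{0},\lambda_{0})u'$; then $f=(p_{\lambda i_{0}}^{-1};i_{0},\lambda)$ is an idempotent of $T$ with $fu=u$. Applying the hypothesis of (ii) with this particular $j=i_{0}$ yields $c'=(p_{\lambda i_{0}}^{-1}g^{-1}p_{\mu i}^{-1};i_{0},\mu)\in C$, and a short calculation gives $c'c=f$, whence $c'v=c'(cu)=(c'c)u=fu=u$. Thus $(v,u)$ is an edge and $\Cay(S,C)$ is undirected.

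The principal obstacle is the complete-simplicity step; the rest reduces to bookkeeping in Rees coordinates once the structure of $T$ is known. The observation that undirectedness cuts all edges between the kernel and its complement is what makes this step work, turning a purely graph-theoretic symmetry into the algebraic statement that $\langle C\rangle$ coincides with its minimal ideal. A secondary point to handle carefully is that this argument uses finiteness through the existence of the kernel and of minimal left ideals; for general periodic $T$ I would invoke the corresponding structure theory from the references, but for the finite semigroups occurring in the main theorems this is immediate.
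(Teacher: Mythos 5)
The paper does not actually prove this proposition: it is quoted verbatim from \cite[Lemma~4]{Kelarev2002undirected}, so your attempt can only be judged on its own merits. Most of it holds up. The derivations of $CS=S$ and $CT=T$ by reversing the edges $(v,cv)$ and $(c,c^2)$ are correct, and both Rees-coordinate computations check out: in (i)$\Rightarrow$(ii), equating $c'(cu)=(g''p_{\rho i}gp_{\lambda j}h;k,\nu)$ with $(h;j,\nu)$ indeed forces $k=j$ and $g''=p_{\lambda j}^{-1}g^{-1}p_{\rho i}^{-1}$, independently of $h$ and $\nu$; in (ii)$\Rightarrow$(i), the element $f=(p_{\lambda i_0}^{-1};i_0,\lambda)$ does satisfy $fc_0=c_0$, hence $fu=u$, and $c'c=f$ for the $c'$ supplied by the hypothesis, so the reverse edge exists. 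This direction correctly uses $CS=S$ exactly where it is needed and requires no periodicity.

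The genuine gap is your very first structural step: ``because $T$ is periodic it has a kernel.'' That is false for general periodic semigroups. For instance, the semilattice $(\N,\max)$ is a band (hence periodic), its ideals are exactly the up-sets $\{n,n+1,\dots\}$, and it has no minimal ideal; minimal left ideals likewise need not exist. Since the proposition assumes only that $\langle C\rangle$ is periodic, your argument as written proves it only in the finite case, and your fallback --- ``invoke the corresponding structure theory from the references'' --- is circular, because the relevant structure statement in the cited literature \emph{is} this proposition. The fix stays entirely within your own toolkit: instead of reversing a single edge, reverse a whole path. For $a=c_n\cdots c_1\in T$ and any $t\in T$, undirectedness applied successively along the path $t,\,c_1t,\,c_2c_1t,\dots,at$ produces $d_1,\dots,d_n\in C$ with $t=d_1\cdots d_n\,at$, so $t\in Tat\subseteq TaT$ for all $a,t\in T$. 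Hence $TaT=T$ for every $a$, i.e.\ $T$ is simple --- no kernel needed. A periodic simple semigroup is then completely simple: periodicity supplies idempotents (some power of any element is idempotent) and rules out bicyclic subsemigroups (the bicyclic monoid is not periodic), so Andersen's theorem applies; alternatively, one checks directly that $eTe$ is a periodic simple monoid, hence a group, so every idempotent is primitive. Substituting this path-reversal argument for your kernel argument makes the proof complete at the stated level of generality; your kernel-separation observation remains a valid and pleasant shortcut when $S$ is finite, which covers the applications in this paper.
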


A graph $\Gamma$ is said to be
\textit{connected} if its underlying
undirected graph is connected. If,
for each pair of vertices $x, y$ of
$\Gamma$, there exists a directed path
from $x$ to $y$, then $\Gamma$ is said
to be \textit{strongly connected}.
A (strong) connected component of
$\Gamma$ is a maximal strongly connected
subgraph of $\Gamma$. Obviously,
$\Gamma$ is a vertex-disjoint union of
its connected components.

\begin{lemma}\label{l:connected_component}
{\rm(\cite[Lemma~5.1]{KelarevPraeger2003})}
Let $S$ be a semigroup, $C$ a subset
of $S$, and $x \in S$. Let $C_x$ be
the set of all vertices $y$ of
$\Cay(S, C)$ such that
there exists a directed path from $x$
to $y$. Then $C_x$ is equal to the
right coset $\langle C \rangle x$.
\end{lemma}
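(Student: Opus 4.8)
The plan is to prove the two inclusions $\langle C \rangle x \subseteq C_x$ and $C_x \subseteq \langle C \rangle x$ separately, both by a direct translation between directed paths in $\Cay(S,C)$ and products of elements of $C$. The key observation is that, by the definition of the edge set, a single edge $(u,v)$ of $\Cay(S,C)$ means precisely that $v = cu$ for some $c \in C$; hence following a directed path of length $n$ starting at $x$ amounts to left-multiplying $x$ successively by $n$ elements of $C$, so that the endpoint of such a path has the form $c_n c_{n-1} \cdots c_1 x$ for suitable $c_1, \ldots, c_n \in C$.

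For the inclusion $\langle C \rangle x \subseteq C_x$, I would take an arbitrary element $w \in \langle C \rangle$ and write it as a product $w = c_n c_{n-1} \cdots c_1$ with $n \ge 1$ and each $c_i \in C$. The partial products then yield the vertex sequence $x,\ c_1 x,\ c_2 c_1 x,\ \ldots,\ c_n \cdots c_1 x = wx$, in which each consecutive pair is obtained from the previous vertex by left-multiplication by some $c_i \in C$ and is therefore an edge of $\Cay(S,C)$. This sequence is a directed path from $x$ to $wx$, so $wx \in C_x$; as $w$ was arbitrary, $\langle C \rangle x \subseteq C_x$.

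For the reverse inclusion $C_x \subseteq \langle C \rangle x$, I would argue by induction on the length of a directed path. Given $y \in C_x$, choose a directed path $x = x_0, x_1, \ldots, x_n = y$ with $n \ge 1$, so that $x_i = c_i x_{i-1}$ for some $c_i \in C$ at each step. A straightforward induction gives $x_i = c_i c_{i-1} \cdots c_1 x$ for every $i$, and in particular $y = x_n = (c_n \cdots c_1)x$. Since $c_n \cdots c_1 \in \langle C \rangle$, this shows $y \in \langle C \rangle x$, completing both inclusions and hence the equality $C_x = \langle C \rangle x$.

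The argument is essentially bookkeeping, so the only real point requiring care is the convention on directed paths. I would fix the convention that a directed path consists of at least one edge (length $n \ge 1$); this is exactly what guarantees that the accumulated product $c_n \cdots c_1$ lies in the subsemigroup $\langle C \rangle$, without any need for a formal identity element, and it makes the two inclusions meet precisely, the reachable set being the union over all $n \ge 1$ of the sets $\{c_n\cdots c_1 x : c_1,\dots,c_n\in C\}$, which is exactly $\langle C \rangle x$. I would also double-check the order of multiplication: because edges use left-multiplication $v = cu$, the generators accumulate on the left, and this must be tracked consistently in both directions of the proof. Note that no periodicity or finiteness of $S$ is needed for this lemma, which is purely formal.
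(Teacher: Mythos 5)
Your proof is correct: the paper itself states this lemma without proof, citing \cite[Lemma~5.1]{KelarevPraeger2003}, and your double-inclusion argument translating directed paths from $x$ into left-accumulated products $c_n c_{n-1} \cdots c_1 x$ is exactly the standard argument behind that citation. Your care about the convention that paths have length at least one is well placed, since $\langle C \rangle$ need not contain an identity and $x$ itself need not lie in $\langle C \rangle x$ in general.
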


\section{Proofs of the main results}
\label{s:proofs}

\begin{proofthm} {\it of
Theorem~\ref{t:span=F(k1)}.}
The implication (ii)$\Rightarrow$(i)
is obvious. Let us prove implications
(i)$\Rightarrow$(iii) and
(iii)$\Rightarrow$(ii).

(i)$\Rightarrow$(iii):
Suppose to the contrary that condition
(i) holds, but (iii) is not satisfied.
This means that there exist a function
$F: \N \rightarrow \N$ and an integer
$\ell > 1$ such that
(\ref{eq:lambda<=F(k1)}) holds for all
$k_1, \dots, k_\ell \in \N$, but the
graph $\Gamma$ is not a disjoint union
of complete graphs. Clearly, any graph
is a disjoint union of complete graphs
if and only if $d(u, v) \in \{0, 1,
\infty\}$ for every pair of
vertices $u, v$ of the graph. It
easily follows that $\Gamma$ contains
a pair of vertices $u, v$ at distance
$d(u, v) = 2$ since the diameter of
every connected component is not $1$.
Fix any value $k_1 \in \N$ and take
$k_2 = F(k_1) + 1$. For any
$L(k_1, \dots, k_\ell)$-labelling
$f$ of $\Gamma$ with span
$\lambda_{k_1, \dots, k_\ell}$,
we get $|f(u) - f(v)|
\ge k_2 > F(k_1)$. The maximality of
$\lambda_{k_1, \dots, k_\ell}$ implies
that $|f(u) - f(v)| \le \lambda_{k_1,
\dots, k_\ell}(\Gamma) \le F(k_1)$. This
contradiction shows that our assumption
was wrong and (iii) must have been
satisfied.

(iii)$\Rightarrow$(ii):
Suppose that $\Gamma$ is a disjoint
union of complete graphs. Choose a
connected component of $\Gamma$ with
the largest number of vertices, and
denote the vertices of this component
by $v_1,\dots, v_n$, where $n \in \N$.
Then the mapping $f$ defined by
$f(v_i) = k_1 (i - 1) + 1$, for
$i = 1, \dots, n$, is an $L(k_1, \dots,
k_\ell)$-labelling of the connected
component. Its span is equal to
$|f(v_n)-f(v_1)|=|(k_1(n-1)+1)-1|=k_1 (n - 1)$.
Similar labellings can be defined for all other
connected components of $\Gamma$,
since each of them has at most $n$
vertices. This defines an
$L(k_1, \dots, k_\ell)$-labelling of
$\Gamma$ with span $k_1 (n - 1)$.
Taking the function $F(x) = x (n - 1)$,
we see that equality
(\ref{eq:lambda=F(k1)})
is always valid for $F$.
This means that (ii) holds,
which completes the proof.
\end{proofthm}

\begin{proofthm} {\it of
Theorem~\ref{t:span_cayley_union_of_complete}.}
Implications
(v)$\Rightarrow$(iv)$\Rightarrow$(iii)$\Rightarrow$(ii)
are obvious. Implication (ii)$\Rightarrow$(vi)
follows from Theorem~\ref{t:span=F(k1)}.
Let us prove implications
(i)$\Rightarrow$(vi),
(vi)$\Rightarrow$(i) and
(vi)$\Rightarrow$(v).

(i)$\Rightarrow$(vi):
Suppose that condition (i) holds.
Then there exist a group $G$, sets
$I$, $\Lambda$ and $\Lambda \times I$
sandwich matrix $P$ over $G$ such
that the subsemigroup $T = \langle C
\rangle$ is isomorphic to the
completely simple semigroup
$M(G; I, \Lambda; P)$.

Take any element $s$ in $S$ and
consider any pair of vertices
$t_1 s, t_2 s$, where $t_1, t_2 \in T$.
Given that $S = C S$, we can find
$c \in C$ and $s_1 \in S$ such that
$s = c s_1$. Since $c \in T$, there
exist $g_c \in G$, $i_c \in I$ and
$\lambda_c \in \Lambda$ such that
$c = (g_c; i_c, \lambda_c)$.
Likewise, $t_j = (g_j; i_j, \lambda_j)$
for $j = 1, 2$, $g_j \in G$,
$i_j \in I$ and
$\lambda_j \in \Lambda$.
Since $S = C S$, we can also find
$c_1 \in C$ and $s_2 \in S$ such that
$t_1 c = c_1 s_2$. Since the set $C c_1$
is a left ideal of $T$, it follows that
$C t_1 c = C c_1 s_2$ is also a left
ideal of $T$.
Lemma~\ref{l:Rees_matrix}(ii) implies
that $C_{i_2*} t_1 c = C t_1 c
\cap T_{i_2*}$. Hence $C_{i_2*} t_1 c$
is a left ideal of $T_{i_2*}$, because
$C t_1 c$ is a left
ideal of $T$. In view of conditions
(i) and (ii) of Lemma~\ref{l:Rees_matrix},
$C_{i_2*} t_1 c$ is contained in
$T_{i_2 \lambda_c}$. Hence
$C_{i_2*} t_1 c$ is a left ideal of
$T_{i_2 \lambda_c}$. However,
$T_{i_2 \lambda_c}$ is a subgroup
of $T$ by
Lemma~\ref{l:Rees_matrix}(vii),
and groups do not have proper
left ideals. Therefore
$C_{i_2*} t_1 c = T_{i_2 \lambda_c}$,
and so $t_2 c = c_2 t_1 c$ for some $c_2
\in C_{i_2 *}$. Thus, we get $t_2 s =
t_2 c s_1 = c_2 t_1 c s_1 = c_2 t_1 s$.
This means that $(t_1 s, t_2 s)$ is
an edge of $\Cay(S, C)$.
Since $t_1 s$ and $t_2 s$ were chosen
arbitrarily, it follows that
the set $T s$ induces a complete
subgraph of $\Cay(S, C)$.

Clearly, $S$ is a union of the
sets $T s$, for $s \in S$. Suppose that
two sets $T s_1$ and $T s_2$ are not
disjoint for some $s_1, s_2 \in S$.
Then there exists $x \in T s_1 \cap T
s_2$. Choose any $x_1 \in T s_1$ and
$x_2 \in T s_2$. Since $T s_1$ induces a
complete subgraph in $\Cay(S, C)$, we
see that $(x, x_1)$ is an edge of
$\Cay(S, C)$. Similarly, $(x, x_2)$ is
an edge of $\Cay(S, C)$. Hence
$x_1, x_2 \in C x \subseteq T x$.
However, $T x$ also induces a complete
subgraph of $\Cay(S, C)$. It follows
that $(x_1, x_2)$ is an edge of
$\Cay(S, C)$.
Lemma~\ref{l:connected_component}
implies that $T x = T s_1 = T s_2$.

Since the semigroup $S$ is assumed
to be finite, it follows that for
some $n \in \N$ and
$s_1, \dots, s_n \in S$ the
graph $\Cay(S, C)$ is a disjoint union
of the complete graphs induced by the
sets $T s_1, \dots, T s_n$. Thus,
condition (vi) is satisfied.

(vi)$\Rightarrow$(i):
Suppose that condition (vi) holds,
 that is, $\Cay(S, C)$ is a disjoint
union of complete graphs. Then
$\Cay(S, C)$ is undirected, and so
Proposition~\ref{p:Cayley_undirected}
says that $C S = S$ and $\langle C
\rangle$ is a completely simple
semigroup. Take an arbitrary
element $c$ in $C$. It remains
to verify that $CS = S$, and the
set $C c$ is a left ideal
of $T$ and a left group.

Put $T = \langle C \rangle$. There
exist a group $G$, sets $I$,
$\Lambda$ and $\Lambda \times I$
sandwich matrix $P$ over $G$ such
that $T = M(G; I, \Lambda; P)$.
Besides, $c = (g_c; i_c, \lambda_c)$
for some $g_c \in G$, $i_c \in I$
and $\lambda_c \in \Lambda$.

First, note that $\Cay(T, C)$
coincides with the subgraph of
$\Cay(S, C)$ induced by the set
$T$ of vertices. Therefore it is
also a disjoint union of complete
graphs.

Clearly, $C c$ is equal to the set of
vertices of the connected component of
$\Cay(T, C)$ containing $c$. It follows
from Lemma~\ref{l:connected_component}
that the set of vertices of
the same connected component is
equal to $T c$. Since it is a complete
graph with edges determined by the left
multiplication of the elements of $C$,
we see that it is also equal to the set
$C^2 c$. Since $T C \subseteq T$, we
get $T C c \subseteq T c = C c$.
This means that the set $C c$ is
a left ideal of $T$.

Condition~(iii) of Lemma~\ref{l:Rees_matrix}
implies that $T c = T_{* \lambda_c}$.
Condition~(x) of Lemma~\ref{l:Rees_matrix}
tells us that $T c$ is a left group, as
required.

(vi)$\Rightarrow$(v):
Suppose that condition (vi) holds. Since
implication (vi)$\Rightarrow$(i) has
already been proved, we can also use
(i) and all properties that have
been verified during the proof of~(i).
First of all note that, since
$\Cay(S, C)$ is a union of complete
graphs, it is undirected. Take any
integer $\ell > 1$, any
$k_1, \dots, k_\ell \in \N$, and
pick an arbitrary element $c \in C$.
To prove the assertion, we have
to verify that equality
(\ref{eq:complete:lambda=(|Cc|_1)k1})
is valid.

As it has been shown in the proof of the
implication (i)$\Rightarrow$(vi) above,
$\Cay(S, C)$ is a disjoint union of the
graphs induced by the sets of vertices
$C c s$, for $c \in C$, $s \in S$.
The number of vertices of each of these
subgraphs does not exceed the cardinality
$n = |C c|$. As noted above in the
discussion of equality
(\ref{eq:upper_bound_underlying}),
each complete graph $K_{n}$
has an $L(k_1, \dots, k_\ell)$-distance
labelling with minimum span
$(n - 1) k_1$. It follows that
(\ref{eq:complete:lambda=(|Cc|_1)k1})
holds. This completes the proof.
\end{proofthm}

The following easy lemma is used
repeatedly in the proof of
Theorem~\ref{t:span_combinatorial}.

\begin{lemma}\label{l:e_xx=x}
Let $S$ be a semigroup with a
subset $C$ such that
$\langle C \rangle$ is
a band, $C S = S$, and let
$x \in S$. Then there
exists $e_x \in C$ such that
$e_x x = x$.
\end{lemma}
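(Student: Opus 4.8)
The plan is to extract a left identity for $x$ directly from the covering hypothesis $CS=S$, using the fact that every element of $C$ is idempotent. The key observation is that the same element $c\in C$ that witnesses $x\in CS$ will serve as the required $e_x$, because left-multiplying $x$ by $c$ simply reproduces the factorisation of $x$ with a squared leading factor, and $c^2=c$.

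First I would use $CS=S$ together with $x\in S$ to fix a factorisation $x=cs$ with $c\in C$ and $s\in S$. Next I would record that $c\in C\subseteq\langle C\rangle$ and that $\langle C\rangle$ is a band, so $c$ is an idempotent: $c^2=c$. (It is important here that the band identity applies to $c$ itself as an element of $\langle C\rangle$, not merely to products of generators.) Then I would compute, using associativity,
\begin{equation}
c\,x = c\,(cs) = (c\,c)\,s = c^2 s = c\,s = x,
\end{equation}
so that setting $e_x=c$ gives an element of $C$ with $e_x x = x$, as required.

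There is essentially no obstacle to overcome: the entire argument rests on the single idea that the ``coverer'' of $x$ from $CS=S$ is automatically an idempotent and hence acts as a left identity on $x$. The only points warranting care are bookkeeping ones, namely that $c$ genuinely lies in $C$ (which the factorisation guarantees) and that idempotency is applied to $c$ rather than to a longer word in the generators. No appeal to the finer Rees-matrix structure of $\langle C\rangle$ is needed, which is why this lemma can be invoked freely throughout the proof of Theorem~\ref{t:span_combinatorial}.
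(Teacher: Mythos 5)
Your proof is correct and is essentially identical to the paper's: both extract a factorisation $x = cs$ from $CS = S$ and use the idempotency of $c$ to get $cx = c^2 s = cs = x$. (If anything, you are slightly more careful than the paper, which momentarily says ``since $S$ is a band'' where it means that $c$ is idempotent because $\langle C\rangle$ is a band.)
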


\begin{proof}
Take any element $x$ in $S$. Since
$C S = S$, it follows that there exist
$e_x \in C$, $f_x \in S$ such that
$e_x f_x = x$. Since $S$ is a band,
$e_x e_x = e_x$. Hence we get
$e_x x = e_x e_x f_x = e_x f_x = x$,
as required.
\end{proof}

\begin{proofthm} {\it of
Theorem~\ref{t:span_combinatorial}.}
Implications
(v)$\Rightarrow$(iv)$\Rightarrow$(iii)$\Rightarrow$(ii)
are obvious. Let us prove implications
(ii)$\Rightarrow$(i) and
(i)$\Rightarrow$(v).

(ii)$\Rightarrow$(i):
Suppose to the contrary that
condition~(ii) holds, but $S$ is
not combinatorial. This means that $S$
contains a subgroup $G$, which has
an element $g$ different from the
identity $e$ of $G$. Take $T = G$
and $C = \{g, g^{-1}\}$. The
Cayley graph $\Cay(T, C)$ is
undirected, because $C = C^{-1}$.
Therefore
inequality~(\ref{eq:comb:lambda_le_F(k1)})
holds, and so condition~(ii) of
Theorem~\ref{t:span_cayley_union_of_complete}
is satisfied. Hence $\Cay(T, C)$ is a
disjoint union of complete graphs,
where it is assumed that the complete
graphs contain all loops. It follows
that $(e, e)$ is an edge of
$\Cay(T, C)$. However, $g e \ne e$ and
$g^{-1} e \ne e$. This contradiction shows
that our assumption was wrong and $S$
must have been combinatorial.

(i)$\Rightarrow$(v):
Suppose that condition (i) holds.
Choose a nonempty subsemigroup $T$
of $S$ and a nonempty subset $C$
of $T$ such that the Cayley graph
$\Cay(T, C)$ is undirected. Take
any integer $\ell > 1$, any
$k_1, \ldots, k_\ell \in \N$,
and fix an arbitrary element $c$
is $C$. We have to verify that
equality~(\ref{eq:comb:lambda=(|Cc|-1)k1})
is valid.

Proposition~\ref{p:Cayley_undirected}
implies that $C T = T$ and the semigroup
$\langle C \rangle = M(H; I, \Lambda; P)$
is completely simple.
Lemma~\ref{l:Rees_matrix}(vii) tells
us that for each $i \in I$,
$\lambda \in \Lambda$, the set
$T_{i\lambda}$ is isomorphic to
$G$ and is a maximal subgroup of
$\langle C \rangle$. However, $T$ is
combinatorial, because it is a
subsemigroup of $S$. Therefore
$|G| = 1$.  It follows that $\langle C
\rangle$ is a rectangular band, and so
it is isomorphic to the direct product
of a left zero band $I$ and a right zero
band $\Lambda$, by
Lemma~\ref{l:rectangular=left_x_right}.

Take an arbitrary element $x =
(g; i, \lambda) \in \langle C \rangle$,
where $i \in I$, $\lambda \in \Lambda$.
We claim that $\langle C \rangle x
= C x$. To prove this, note that it
easily follows from $\langle C \rangle
= I \times \Lambda$ and the definitions
of a right zero band and a left zero band,
that
\begin{equation}\label{eq:pr:<C>x}
\langle C \rangle x = \{ (j, \lambda) \mid j \in I\}.
\end{equation}
For any $j \in I$ and $\mu \in \Lambda$,
we get
$(j, \mu) x \in \langle C \rangle_{j \lambda}
\subseteq \langle C \rangle_{j *}$.
If there existed $j \in I$ such that
$C \cap \langle C \rangle_{j *} = \emptyset$,
then it would follow that
$\langle C \rangle \cap \langle C
\rangle_{j *} = \emptyset$, which would
be a contradiction. Therefore, for each
$j \in I$, there exists $c_j \in C \cap
\langle C \rangle_{j *}$. Hence we have
$c_j x = (j, \lambda)$. It follows that
\begin{equation}\label{eq:pr:Cx}
C x = \{ (j, \lambda) \mid j \in I \}.
\end{equation}
Thus, (\ref{eq:pr:<C>x}) and
(\ref{eq:pr:Cx}) yield
$\langle C \rangle x = C x$.

Second, it also follows from
equalities (\ref{eq:pr:<C>x}) and
(\ref{eq:pr:Cx}) that all cardinalities
$|\langle C \rangle x| = |C x|$ are
equal to the same number $|I|$, and
so they are all equal to
$|C c|$.

Third, (\ref{eq:pr:Cx}) implies that the
subgraph induced by the set $C x$ in
$\Cay(T, C)$ is a complete graph.

Next, we take an arbitrary element $t$ in
$T$, and claim that the subgraph induced
by the set $C t$ in $\Cay(T, C)$ is a
complete graph. Lemma~\ref{l:e_xx=x}
tells us that $e_t t = t$ for some $e_t
\in C$. We have already shown in the
preceding paragraph that $C e_t$ induces
a complete subgraph in $\Cay(T, C)$.
This means that, for any $c_1, c_2 \in C$,
the pair $(c_1 e_t, c_2 e_t)$ is an edge
of $\Cay(T, C)$, and so there exists
$c_{c_1 c_2} \in C$ such that
$c_{c_1 c_2} c_1 e_t = c_2 e_t$. We get
$c_{c_1 c_2} c_1 t = c_{c_1 c_2} c_1 e_t t
= c_2 e_t t = c_2 t$. This means that
$(c_1 t, c_2 t)$ is an edge of the
subgraph induced by $C t$ in $\Cay(T, C)$.
Thus, this subgraph is indeed complete,
with a loop attached to each of its vertices.

Further, suppose that for some elements
$t_1$ and $t_2$ in $T$ the sets $C t_1$
and $C t_2$ are not disjoint. Fix any
element $z \in C t_1 \cap C t_2$, and
consider an arbitrary pair of elements
$z_1 \in C t_1$ and $z_2 \in C t_2$,
where $z_1 = c_1 t_1$, $z_2 = c_2 t_2$,
$c_1, c_2 \in C$. Since $C t_1$ induces
a complete subgraph in $\Cay(T, C)$,
we can find $c' \in C$ such that
$c' z_1 = c' c_1 t_1 = z$. Likewise,
since $C t_2$ induces a complete
subgraph in $\Cay(T, C)$, there exists
$c'' \in C$ such that $c'' z = c_2 t_2
= z_2$. Hence $c'' c' z_1 = z_2$.
Lemma~\ref{l:e_xx=x} implies that
$e_{z_1} z_1 = z_1$ for some $e_{z_1}
\in C$. As we have already proved in the
preceding paragraph, the subgraph
induced by $C e_{z_1}$ in $\Cay(T, C)$
is complete. Hence there exists $\bar{c}
\in C$ such that $\bar{c} e_{z_1} =
c'' c' z_1$. Therefore $\bar{c} z_1 =
z_2$, which means that $(z_1, z_2)$ is
an edge of $\Cay(T, C)$. It follows that
$z_1, z_2 \in C z$. Thus $C t_1 = C t_2
= C z$.

We have shown that any sets of the
form $C t_1$ and $C t_2$, which are
not disjoint, coincide. Since $S$
is finite, it follows that
there exist $n \in \N$ and elements
$t_1, \dots, t_n$ such that $T$ is a
disjoint union of the sets
$C t_1, \dots, C t_n$. We have also
proved that each of the sets $C t_j$,
for $j = 1, \dots, n$, induces a
complete subgraph in $\Cay(T, C)$.
Thus, $\Cay(T, C)$ is a disjoint union
of subgraphs induced by the sets
$C t_1, \dots, C t_n$.

For each $j = 1, \dots, n$,
Lemma~\ref{l:e_xx=x} allows us to find
an element $e_j$ such that $e_j t_j =
t_j$. We get $C t_j = C e_j t_j$.
Therefore the cardinality of the set $C
t_j$ is equal to $|C e_j \cdot t_j|$,
which does not exceed $|C e_j|$. As we
have proved above, $|C e_j| = |C c|$.
Thus, $\Cay(T, C)$ is a disjoint union
of complete graphs each with at most
$|C c|$ vertices. As we have already
noticed a few times during the proofs
above, for every complete graph
$K_m$ with $m$ vertices we have
$\lambda_{k_1, \dots, k_\ell}(K_m) =
(m - 1) k_1$. It follows that
$\lambda_{k_1, \dots, k_\ell}(\Cay(T, C))
= (|C c| - 1) k_1$. This completes the
proof.
\end{proofthm}

\begin{proofthm} {\it of
Theorem~\ref{t:span_right_zero_band}.}
Implications
(v)$\Rightarrow$(iv)$\Rightarrow$(iii)$\Rightarrow$(ii)
are obvious.
Let us prove implications
(i)$\Rightarrow$(v) and
(ii)$\Rightarrow$(i).

(i)$\Rightarrow$(v):
Suppose that $S$ is a right zero band.
Take an arbitrary nonempty subset $C$ of
$S$, any integer $\ell > 1$, and any
positive integers $k_1, \ldots, k_\ell$.
The set of edges of $\Cay(S, C)$ is
equal to $E =\{(s, s) \mid s \in S\}$,
because $c s = s$ for every $c \in C$,
$s \in S$. It follows that $\Cay(S, C)$
is undirected and the function
assigning $1$ to all vertices of
$\Cay(S, C)$ is an
$L(k_1, \dots, k_\ell)$-labelling.
Therefore
$\lambda_{k_1, \dots, k_\ell}(\Cay(S, C)) = 0$.
In the right zero band $S$, we have
$C c = \{c\}$, for all $c \in C$.
Hence $(|C c| - 1) k_1 = 0$, and so
equality (\ref{eq:lambda=(|Cc|-1)k_1})
holds for all $c \in C$. This means
that condition~(v) is satisfied, as
required.

(ii)$\Rightarrow$(i):
Suppose that condition (ii) of
Theorem~\ref{t:span_right_zero_band}
holds. It shows immediately that,
for any subset $C$ of $S$,
condition~(ii) of
Theorem~\ref{t:span_cayley_union_of_complete}
is satisfied. Since
Theorem~\ref{t:span_cayley_union_of_complete}
has already been proved above, we can
apply its condition (i) and conclude that
for every subset $C$ of $S$ we have the
following properties:
$CS = S$, $\langle C \rangle$
is a completely simple semigroup,
and for each $c \in C$ the
set $C c$ is a left group
and a left ideal of
$\langle C \rangle$. Now let us apply
these properties to several different
subsets $C$ of $S$.

First, consider the set $C = S$.
We see that $S = \langle C \rangle =
M(H; I, \Lambda; P)$ is a completely
simple semigroup.

Second, pick any element $c \in S$
and consider the set $C = \{c\}$.
There exist $g \in G$, $i \in I$,
$\lambda \in \Lambda$ such that
$c = (g; i, \lambda)$, and so
$c \in S_{i *}$. As noted above,
$C S = S$. However, condition~(ii) of
Lemma~\ref{l:Rees_matrix} tells us that
$S_{i *}$ is a left ideal of $S$;
whence $c S \subseteq S_{i *}$.
It follows that $S = S_{i *}$, which
means that $I = \{i\}$. Condition~(x)
of Lemma~\ref{l:Rees_matrix} tells us
that $S = S_{i *}$ is a right group.
By Lemma~\ref{l:right-groups}(iii),
$S$ is isomorphic to the direct
product of a right zero band and a
group. To simplify notation, we may
assume that $\Lambda$ is a right
zero band and $S = G \times
\Lambda$.

Third, choose any element $c =
(g, \mu) \in G \times \Lambda = S$,
where $g \in G$, $\mu \in \Lambda$,
and consider the set $C = \{(g, \mu)\}$.
Evidently, $C c = \{ (g^2, \mu)\}$. As
noted above, we know that $C c$ is a
left ideal of $\langle C \rangle$. Hence
$c (g^2, \mu) = (g^3, \mu) \in C c$, and
so $g^2 = g^3$. It follows that $g$ is
equal to the identity $e$ of $G$. Since
$g$ was chosen as an arbitrary element
of $G$, we see that $G = \{e\}$.
Therefore $S \cong \Lambda$ is a
right zero band. Thus, condition~(i) of
Theorem~\ref{t:span_right_zero_band}
holds. This completes the proof.
\end{proofthm}

\small

\end{document}